\theoremstyle{plain}
\newtheorem{theo}{Theorem}[section]
\newtheorem{lem}[theo]{Lemma}
\theoremstyle{definition}
\newenvironment{proofwithnoend}{\noindent{\it Proof. }}{\par\medskip}
\theoremstyle{plain}
\newtheorem{lemma}[theo]{Lemma}
\theoremstyle{definition}
\newcommand{\beq}{\begin{equation}}
\newcommand{\eeq}{\end{equation}}
\renewcommand{\d}{\delta}
\newcommand{\g}{\gamma}
\renewcommand{\l}{\lambda}
\newcommand{\bC}{\mathbb{C}}
\newcommand{\bR}{\mathbb{R}}
\newcommand{\ga}{\mathfrak{a}}
\renewcommand{\gg}{\mathfrak{g}}
\newcommand{\gh}{\mathfrak{h}}
\newcommand{\gk}{\mathfrak{k}}
\newcommand{\gm}{\mathfrak{m}}
\newcommand{\gn}{\mathfrak{n}}
\newcommand{\go}{\mathfrak{o}}
\newcommand{\gt}{\mathfrak{t}}
\newcommand{\gu}{\mathfrak{u}}
\newcommand{\gz}{\mathfrak{z}}
\newcommand{\gX}{\mathfrak{X}}
\newcommand{\so}{\mathfrak{so}}
\newcommand{\su}{\mathfrak{su}}
\newcommand\SO{\mathrm{SO}}
\newcommand\SU{\mathrm{SU}}
\newcommand\U{\mathrm{U}}
\newcommand\T{\mathrm{T}}
\newcommand\Spin{\mathrm{Spin}}
\newcommand\Sp{\mathrm{Sp}}
\newcommand\G{\mathrm{G}}
\newcommand{\cB}{\mathcal{B}}
\newcommand{\cC}{\mathcal{C}}
\newcommand{\cO}{\mathcal{O}}
\newcommand\na{{\nabla}}
\newcommand\hv{\widehat v}
\newcommand\hw{\widehat w}
\newcommand\hA{\widehat A}
\renewcommand{\square}{\kern1pt\vbox
{\hrule height 0.6pt\hbox{\vrule width 0.6pt\hskip 3pt
\vbox{\vskip 6pt}\hskip 3pt\vrule width 0.6pt}\hrule height0.6pt}\kern1pt}
\DeclareMathOperator\End{End\;}
\DeclareMathOperator\Ad{Ad}
\DeclareMathOperator\ad{ad}
\newcommand{\wt}{\widetilde}
\newcommand{\wh}{\widehat}
\newcommand{\be}{\begin{equation}}
\newcommand{\ee}{\end{equation}}
\def\<#1,#2>{\langle\,#1,\,#2\,\rangle}
\newcommand{\arr}{\begin{array}{rlll}}
\newcommand{\ea}{\end{array}}
\newcommand{\bea}{\begin{eqnarray}}
\newcommand{\eea}{\end{eqnarray}}
\newcommand{\bean}{\begin{eqnarray*}}
\newcommand{\eean}{\end{eqnarray*}}
\def\sideremark#1{\ifvmode\leavevmode\fi\vadjust{%            The remark
\vbox to0pt{\hbox to 0pt{\hskip\hsize\hskip1em%               will appear only
\vbox{\hsize3cm\tiny\raggedright\pretolerance10000%          on the side
\noindent #1\hfill}\hss}\vbox to8pt{\vfil}\vss}}}%           in 3cm
\newcounter{ssig}
\newcounter{ttig}
\begin{document}
%%%%%%%%%%%%%%%%%%%%%%%%%%%%%%%%%%%%%%%%%%%%%%%%%%%%%%%%%%%%%%
%
%               T  I  T  L  E
%
%%%%%%%%%%%%%%%%%%%%%%%%%%%%%%%%%%%%%%%%%%%%%%%%%%%%%%%%%%%%%%
%%%%%%%%%%%%%%%%%%%%%%%%%%%%%%%%%%%%%%%%%%%%%%%%%%%%%%%%%%%%%%
\title[$6$-dimensional nearly K\"ahler manifolds of cohomogeneity one]{$6$-dimensional
nearly K\"ahler manifolds\\ of cohomogeneity one}
\author{Fabio Podest\`a and Andrea Spiro}
\subjclass[2000]{53C25, 57S15}
%\date{}
\keywords{Nearly K\"ahler Manifolds, Cohomogeneity One Actions}
\begin{abstract} We consider $6$-dimensional strict nearly K\"ahler manifolds acted on by a compact, cohomogeneity
one automorphism group $G$. We classify the compact manifolds of this class up to $G$-diffeomorphisms.
We also prove that the manifold has constant sectional curvature whenever the group $G$ is simple. \end{abstract}
\maketitle

\section{Introduction}
\setcounter{equation}{0}
\bigskip
A Riemannian manifold ($M,g$) is called {\it nearly K\"ahler} (shortly NK) if it admits an almost complex structure $J$ such that $g$ is Hermitian and $(\nabla_XJ)X = 0$ for every vector field $X$. A NK manifold ($M,g,J$) is called {\it strict} if $\nabla_v J|_p\neq 0$ for every $p\in M$ and every $0\neq v\in T_pM$.
These manifolds have been investigated by A. Gray and many others (see e.g. \cite{G1,G2,M,N1,N2}). The class of NK manifolds includes $3$-symmetric spaces with canonical almost complex structure and it is one of the
sixteen Gray and Hervella' s classes of almost Hermitian manifolds (\cite{G4,GH}).
Moreover their canonical Hermitian connection $D$ has totally skew and $D$-parallel torsion (see e.g. \cite{A,N1}).\par
Recently Nagy proved the following two structure results: a) any NK manifold is locally holomorphically
isometric to the product of a K\"ahler manifold and a strict NK manifold; b) any complete strict NK manifold
is finitely covered by a product of homogeneous $3$-symmetric manifolds, twistor spaces of positive quaternion
K\"ahler manifolds with their canonical NK structure and $6$-dimensional strict NK manifolds (\cite{N1,N2}).\par
Therefore $6$-dimensional strict NK manifolds appear to be the interesting manifolds to focus on. Notice also that
for any  complete $6$-dimensional strict NK manifold ($M^6,g,J$) the structure group reduces to $\SU_3$ and the
first Chern class $c_1(M,J)$ vanishes. In addition, the strictness condition is equivalent to being non-K\"ahler and
the metric $g$ is automatically Einstein with positive scalar curvature. Finally, on simply connected
$6$-dimensional compact manifolds there is a bijective
correspondence between NK structures and unit real Killing spinors
(\cite{Gr}).\par
 All known examples of $6$-dimensional complete, strict NK  manifolds are homogeneous, namely the standard sphere
 $S^6 = \G_2/\SU_3$, the twistor spaces $\bC P^3 = \Sp_2/\T^1\times \Sp_1$ and $F_{1,2} = \SU_3/\T^2$ and the
 homogeneous space $S^3\times  S^3 = \SU_2^3/(\SU_2)_{\operatorname{diag}}$. Actually, Butruille  proved that these
 space exhaust the class of homogeneous strict NK manifolds in six dimensions (\cite{B}).\par\medskip
 In this paper we address the problem of classifying $6$-dimensional strict NK manifolds admitting a
 compact automorphism group acting by cohomogeneity one.
Our first result concerns the classification up to a $G$-diffeomorphism of $6$-dimensional
compact strict (i.e. non-K\"ahler)  NK manifolds of
cohomogeneity one . \par
\begin{theo}\label{T1}
 Let ($M,g,J)$ be a $6$-dimensional simply connected, compact   non-K\"ahler NK manifold and  $G$ a compact connected
 Lie group of isometries of ($M,g$) acting by cohomogeneity one. Then, only one of the following cases may occur:
\begin{itemize}
\item[a)] $G$ is  $\SU_3$ or $\SO_4$ and $M$ is $G$-diffeomorphic to $S^6$;
\item[b)] $G$ is  $\SU_2 \times \SU_2$ and $M$ is $G$-diffeomorphic to $\bC P^2$ or $S^3 \times S^3$.
\end{itemize}
\end{theo}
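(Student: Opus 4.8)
The plan is to merge the structure theory of compact cohomogeneity one manifolds with the algebraic restrictions coming from the $G$-invariant nearly K\"ahler structure; note first that, $M$ being connected and $6$-dimensional, any connected group $G$ of isometries automatically preserves $J$ as well, and that we may assume $G$ acts almost effectively.

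\textbf{Step 1 (cohomogeneity one data).} Since $M$ is compact and simply connected, $M/G$ is a closed interval, its endpoints corresponding to the two singular orbits $G/K^{-}$, $G/K^{+}$ and its interior to the principal orbit $P=G/H$; equivalently $M$ is described by a group diagram $H\subset\{K^{-},K^{+}\}\subset G$ with $K^{\pm}/H\cong S^{\ell_{\pm}}$, reconstructed as the union of the disk bundles $G\times_{K^{\pm}}D^{\ell_{\pm}+1}$ glued along $G/H$. I would record $\dim P=5$, $\dim G/K^{\pm}=5-\ell_{\pm}$, and that each $K^{\pm}$ acts transitively and almost effectively on $S^{\ell_{\pm}}$ with isotropy $H$, so that the triples $(K^{\pm},H,\ell_{\pm})$ are constrained by the Montgomery--Samelson--Borel classification of transitive actions on spheres. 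Simple connectedness enters through the van Kampen identity $\pi_{1}(M)=\pi_{1}(G/H)/\langle\!\langle\,\operatorname{im}\pi_{1}(K^{-}/H),\ \operatorname{im}\pi_{1}(K^{+}/H)\,\rangle\!\rangle$ (together with surjectivity of $\pi_{0}H\to\pi_{0}K^{\pm}$), while the spin condition $w_{2}(M)=0$, automatic for a $6$-dimensional nearly K\"ahler manifold, is carried along as an auxiliary obstruction.

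\textbf{Step 2 (the bound from $J$, and the candidate list).} Here the nearly K\"ahler hypothesis is used. Along $P$ let $\nu$ be the $G$-invariant unit normal and set $\zeta=-J\nu$, a nowhere-vanishing $G$-invariant vector field on $P$; in particular $\mathfrak m:=\mathfrak g/\mathfrak h$ carries a nonzero $H$-fixed vector. The plane $\Span(\nu,\zeta)$ is $J$-invariant and pointwise $H$-fixed, so $J$ restricts to an $H$-invariant, metric-compatible complex structure on the $4$-dimensional subspace $\mathfrak m':=\zeta^{\perp}\cap\mathfrak m$; hence the slice representation of $H$ on $\mathfrak m$, which is faithful since the action is almost effective, takes values in $\{1\}\times\U(2)$, whence $\dim H\le4$ and $\dim G\le9$. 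Together with Step 1 this leaves finitely many pairs $(\mathfrak g,\mathfrak h)$, which I would list and prune: those whose toral factor is too large for van Kampen to allow $\pi_{1}(M)=1$ (starting with $G=T^{5}$) are discarded; those with $\mathfrak m^{H}=0$, or with no $H$-invariant complex structure on $\mathfrak m'$, are discarded — in particular $(\mathfrak g,\mathfrak h)=(\su_3,\mathfrak t^{2})$ is ruled out, so the flag manifold $F_{1,2}$ does not arise; and for each surviving $(\mathfrak g,\mathfrak h)$ one enumerates the intermediate subgroups $K^{\pm}$ with $K^{\pm}/H$ spheres and $\pi_{1}(M)=1$. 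The diagrams that remain should be exactly those of $S^{6}$ (with $G=\SU_{3}$, $H=\SU_{2}$, $K^{\pm}=\SU_{3}$, and with $G=\SO_{4}$) and of $\bC P^{3}$ and $S^{3}\times S^{3}$ (both with $G=\SU_{2}\times\SU_{2}$ and $H=\T^{1}$ embedded with various slopes), together with a handful of further candidates to be eliminated in the last step.

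\textbf{Step 3 (realizing or obstructing the structure).} For each surviving diagram I would cast the $6$-dimensional nearly K\"ahler equations in cohomogeneity one form: on $I\times G/H$ the invariant metric and the canonical $\SU_3$-structure $(\omega,\Psi=\psi^{+}+i\psi^{-})$ depend on finitely many functions of $t\in I$ (the $G$-invariant tensors on $G/H$ being governed by $\Hom_{H}(\mathfrak m,\mathfrak m)$), and the defining relations $d\omega=3\,\psi^{+}$ and $d\psi^{-}=-2\,\omega\wedge\omega$ (in a suitable normalization) become a first-order ODE system of Hitchin type for the induced hypo $\SU_{2}$-structure on the principal orbit. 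Imposing smooth closure at $t=0$ and $t=1$ forces the subgroups $K^{\pm}$ and the admissible initial data, and for each admissible choice one either integrates the system explicitly and identifies the resulting compact cohomogeneity one $G$-manifold — which, being determined up to $G$-diffeomorphism by its group diagram, must be $S^{6}$, $\bC P^{3}$ or $S^{3}\times S^{3}$ — or else reaches a contradiction, thereby disposing of the remaining spurious candidates. Assembling Steps 1--3 gives precisely the list in the statement.

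\textbf{Main difficulty.} I expect the crux to be the case $G=\SU_{2}\times\SU_{2}$: here $H$ is a circle whose embeddings in $G$ form a one-parameter family of slopes, and for each slope one must analyze the ODE system of Step 3 subject to the two endpoint smoothness conditions and show that a globally defined solution exists exactly for the slopes producing $\bC P^{3}$ and $S^{3}\times S^{3}$; the parallel obstruction arguments for the other candidates that pass Step 2 are where the bulk of the work lies.
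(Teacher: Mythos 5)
Your opening claim that a connected isometry group of a six--dimensional non--K\"ahler NK manifold ``automatically preserves $J$'' is false: on the round $S^6$ with its standard NK structure the isometry group is $\SO_7$ while only $\G_2$ preserves $J$, and e.g.\ $\SO_6\subset\SO_7$ acts with cohomogeneity one without preserving $J$. What is true (and what the paper invokes) is the Moroianu--Nagy--Semmelmann result that $G$ preserves $J$ \emph{unless} $(M,g)$ is the round sphere; that exceptional case has to be quoted and dealt with, not assumed away. Moreover your isotropy bound is weaker than what the argument needs: using only that the principal isotropy $H$ preserves $J$ and the plane spanned by $\nu$ and $J\nu$ gives $H\subseteq\U_2$, but since $H$ also fixes $d\omega_p\neq 0$ (non--K\"ahlerness) one gets $H\subseteq\SU_2$, i.e.\ $\gh\in\{0,\bR,\su_2\}$; the whole finite case list rests on this sharper bound.

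The substantive gap is in the pruning. Your stated criteria (van Kampen, $\gm^{H}\neq 0$, existence of an $H$--invariant complex structure on $\gm'$) do not eliminate the surviving candidates, and you defer everything else to a global Hitchin--type ODE analysis with smooth closure at both singular orbits --- which you do not carry out, which is much more than Theorem \ref{T1} requires (the theorem only determines the $G$--diffeomorphism type \emph{given} that an invariant structure exists, so only local necessary conditions are needed), and which is precisely the analysis the paper avoids and, for $\SU_2\times\SU_2$, explicitly postpones to a future paper. The two missing local ingredients are: (a) a six--dimensional non--K\"ahler NK manifold admits no four--dimensional almost complex submanifold (Lemma \ref{subm}, proved via Gray's constant--type identity and the second fundamental form); this is the tool that kills four--dimensional almost complex singular orbits and fixed--point sets, hence excludes the singular isotropies $\bR^2$ and $\su_2\oplus\bR$, the pairs such as $(\su_2+3\bR,\bR)$ and $(2\su_2+2\bR,\su_2)$, and the possibility that the circle $H$ lies in a single $\SU_2$ factor; and (b) the computation of Lemma \ref{lemma33} showing that if the circle $\gk\subset\su_2+\su_2$ is not diagonally embedded in a Cartan subalgebra then every invariant pair $(g,J)$ has $d\omega=0$, contradicting non--K\"ahlerness --- this is what pins down the slope of $H$ and hence reduces the diagrams to the three admissible triples giving $S^6$, $\bC P^3$ and $S^3\times S^3$. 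Without substitutes for (a) and (b) your Step 2 list cannot be reduced to the one in the statement, and your Step 3, as described, does not close the argument.
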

This result  motivates the study  of cohomogeneity one $6$-dimensional non-K\"ahler NK manifolds
acted on by the groups $\SU_3$ and $\SU_2  \times \SU_2$. Our next main result concerns
 $6$-dimensional non-K\"ahler NK-manifolds, not necessarily complete,
 acted on by a simple group $G$ with cohomogeneity one.  \par
\begin{theo}\label{T2}
Let $(M,g,J$) be a 6-dimensional  non-K\"ahler NK  manifold. If a simple Lie group $G$ acts on $M$ as a
group of automorphisms of the NK structure with cohomogeneity one, then $(M,g)$ has constant sectional
curvature  and $G = \SU_3$.
\end{theo}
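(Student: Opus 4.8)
\smallskip\noindent\emph{Proof sketch.}
The plan is to combine a dimension count, exploiting the rigidity of the $\SU_3$--structure carried by every $6$--dimensional strict NK manifold, with an explicit integration of the NK structure equations in an invariant coframe. We may assume $G$ acts effectively. Fix a point $p$ on a principal orbit $G\cdot p=G/H$; then $\dim H=\dim G-5$, and since the coordinate on the orbit space is $G$--invariant the isotropy $H$ fixes the unit normal $\nu$ at $p$. Being a group of automorphisms, $G$ preserves $g$, $J$ and the canonical complex volume form, hence so does $H$; as $H$ also fixes $\nu$ and $\xi:=J\nu$, it acts on the $J$--invariant $4$--plane $(\bR\nu\oplus\bR\xi)^{\perp}\cong\bC^2$ preserving a Hermitian metric and a unit complex volume, so $H\hookrightarrow\SU_2$. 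The isotropy representation of $H$ on $T_pM$ is faithful (an isometry of the connected manifold $M$ fixing a point with trivial differential is the identity), so $H$ is a closed subgroup of $\SU_2$ with $\dim H\in\{0,1,3\}$, whence $\dim G\in\{5,6,8\}$. There is no compact simple Lie group of dimension $5$ or $6$, so $\dim G=8$, $\mathfrak{g}\cong\su_3$ and $H^{0}=\SU_2$; faithfulness of the $4$--dimensional isotropy representation rules out $H^{0}=\SO_3$ (which has no faithful complex $2$--dimensional representation), so in fact $H=\SU_2$ and the principal orbit is $\SU_3/\SU_2=S^5$ with $G=\SU_3$.

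The next step reduces the NK equations to ordinary differential equations. Writing $\su_3=\su_2\oplus\gm$, the isotropy module decomposes into inequivalent irreducible summands $\gm=\gm_0\oplus\gm_1$, with $\gm_0\cong\bR$ trivial and $\gm_1\cong\bC^2$ the standard $\SU_2$--module; consequently every invariant metric on $S^5$ is of the form $a^{2}\,\theta\otimes\theta+b^{2}\,h$, where $\theta$ is the invariant $1$--form dual to $\gm_0$ and $h$ the invariant metric on $\gm_1$, and the invariant almost complex structure is essentially rigid --- it rotates the span of $\partial_t$ and the $\gm_0$--direction by a right angle and equals the complex structure of $\bC^2$ on $\gm_1$. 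On the regular part $M^{\reg}\cong S^5\times I$ we thus have $g=dt^{2}+a(t)^{2}\theta^{2}+b(t)^{2}h$ with $J$ fixed, and the fundamental forms $\omega,\psi^{\pm}$ are invariant forms on $S^5$ whose coefficients are built polynomially from $a$ and $b$. Expanding the $6$--dimensional NK structure equations $d\omega=3\,\psi^{+}$ and $d\psi^{-}=-2\,\omega\wedge\omega$ by means of the structure constants of $\su_3$ yields, besides algebraic relations among $a$, $b$ and their first derivatives, a closed first--order system of ODEs for the pair $(a,b)$.

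This system is rigid: the algebraic relations force $a/b$ to equal the constant for which $a^{2}\theta^{2}+b^{2}h$ is the round metric on $S^5$, and the remaining differential relations then force $b$, up to homothety and a translation of $t$, to be the standard trigonometric profile, so that $g$ is (homothetic to) the round metric on an open piece of $S^6$; in particular $(M^{\reg},g)$ has constant sectional curvature. Since $M^{\reg}$ is open and dense in $M$ and the curvature tensor is continuous, $(M,g)$ has constant sectional curvature everywhere --- necessarily positive, as $g$ is Einstein with positive scalar curvature --- and $M$ embeds $\SU_3$--equivariantly as an open subset of $S^6=\G_2/\SU_3$. The dimension count of the first step is short once one observes that $H\hookrightarrow\SU_2$; the main work, and the genuinely delicate point, is the last step, namely writing the exterior differential NK equations explicitly in the invariant coframe and integrating the resulting system, while checking that no further --- possibly incomplete --- solutions arise besides the round $S^6$ family.
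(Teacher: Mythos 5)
Your overall skeleton (identify $G=\SU_3$, $K=\SU_2$ by embedding the principal isotropy into $\SU_2$ and counting dimensions; write the invariant metric on $I\times \SU_3/\SU_2$ as $dt^2+h^2\,\cB|_{\ga\times\ga}+f^2\,\cB|_{\gn\times\gn}$; reduce the NK condition to ODEs along a normal geodesic; show the solution is unique up to homothety and is the round $S^6$) is exactly the paper's strategy, and the first step is essentially correct. But there is a genuine error in the reduction: you claim the invariant almost complex structure is ``essentially rigid'' on the principal orbit, i.e.\ that on $\gm_1\cong\bC^2$ it must equal a fixed complex structure. This is false. As an $\SU_2$-module, $\gn\cong\bC^2$ is of quaternionic type, so the $\Ad_K$-invariant complex structures on $\gn$ form a $2$-sphere $\{\sum_i a_iJ_i:\sum_i a_i^2=1\}$, not a point; invariance only forces $J\xi$ to lie along the $K$-fixed direction $\wh A$ and $J|_{\wh\gn|_{\gamma(t)}}$ to correspond to some $J_t=\sum_i a_i(t)J_i$ with $t$-dependent coefficients. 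These three functions $a_i(t)$ are unavoidable unknowns of the problem, and in the actual non-K\"ahler solution they are genuinely non-constant: one finds $a_1=-\sqrt{12}\,f'$, $a_2=kf$ with $k\neq 0$, so $J_t$ rotates in the sphere of invariant structures as one moves along the normal geodesic of $S^6$.

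This is not a harmless simplification: if you impose your ansatz $a_2=a_3=0$, $a_1=\pm1$ constant, the resulting components $(\na_\xi J)\wh\gn$ and $(\na_{\wh A}J)\wh\gn$ vanish, and $K$-invariance of $d\omega$ then forces $d\omega\equiv 0$, i.e.\ the structure is K\"ahler --- precisely the case excluded by hypothesis. So your ODE system, set up with $J$ fixed on $\gm_1$, has no non-K\"ahler solutions at all; it cannot reproduce the $S^6$ example, let alone prove uniqueness and constant curvature. The fix is to carry the $a_i(t)$ as unknowns alongside $f,h$ (as the paper does), derive the coupled algebraic/differential system --- which yields $h/f=2/\sqrt3$ in the non-K\"ahler case, $a_2=kf$, $a_3=0$ after a rotation of $J_2,J_3$, and $12(f')^2+k^2f^2=1$ --- and only then integrate to get the trigonometric profile and conclude that the metric is, up to homothety, the round one. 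Separately, note that the final integration step, which you acknowledge as the main work, is only asserted in your sketch; with the corrected ansatz it is exactly the computation that occupies Lemmas 4.1--4.4 and the closing equations of the paper.
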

We will discuss the cohomogeneity one 6-dimensional non-K\"ahler NK manifolds acted on by $\SU_2 \times \SU_2$ in a
future paper.\par\medskip
The structure of the paper is as follows. In Section 2, we give some preliminaries concerning nearly K\"ahler
structures and cohomogeneity one Riemannian manifolds. Sections 3 and 4 are devoted to the proof of
Theorem \ref{T1} and \ref{T2} respectively.  \par
\medskip
Given a Lie group $G$, the corresponding Lie algebra $Lie(G)$ will be always denoted by the corresponding gothic
letter $\gg$. We will also indicate by $\cB$ the opposite of the Cartan-Killing form of a semisimple $\gg$.
Finally,  given an action of  a Lie group $G$ on a manifold $M$ and an element $X \in Lie(G)$, the symbol $\wh X$
will  denote the vector field on $M$ corresponding to $X$.\par
\bigskip
\bigskip
\section{Preliminaries}\label{prelim}
\setcounter{equation}{0}
\subsection{Cohomogeneity one actions} \label{prelim1} Let $G$ be a compact connected Lie group acting almost effectively and isometrically on a Riemannian manifold ($M,g$). The action is called of {\it cohomogeneity one} if generic orbits have codimension one or, equivalently, the orbit space $M/G$ is one dimensional,
possibly with boundary (see e.g. \cite{AA}, \cite{GWZ}, \cite{U} for a detailed exposition).\par
When $M$ is compact with finite fundamental group, the orbit space $M/G$ has exactly two boundary points which correspond to two singular orbits. Denote by $\pi:M\to M/G$ the standard projection and fix a normal geodesic $\gamma:[a,b]\to M$, i.e. a geodesic that meets every $G$-orbit orthogonally, so that $\pi\circ\gamma$ is a homeomorphism between $[a,b]\subset \bR$ and the orbit space $M/G$. The stabilizers $G_{\gamma(t)}$ with $ {t\in ]a,b[}$ are all
equal to a principal isotropy subgroup $K$, while $H_1 = G_{\gamma(a)}$ and $H_2 = G_{\gamma(b)}$ are two singular isotropies both containing $K$.\par
The normal isotropy representation $\nu_i$ maps $H_i$ onto a linear subgroup $\wh H_i\subseteq \operatorname{O}(k_i)$, where $k_i = \operatorname{codim} G/H_i$, which acts transitively on the unit sphere $S^{k_i-1}\cong \wh H_i/ \wh K$, where $\wh K = K/\ker \nu_i$.  In the following table, we reproduce  the well-known  Borel's list of the pairs $(\wh H, \wh K)$ of  connected compact linear groups acting  transitively on spheres.\par
\vskip 0.5 cm
\centerline{
\tiny
\vbox{\offinterlineskip
\halign {\strut
\vrule \vrule \vrule\hfil\  $#$\  \hfil
&\vrule  \vrule\vrule\hfil\  $#$ \ \hfil
&\vrule \vrule\hfil\  $#$\ \hfil
& \vrule \vrule\hfil\  $#$\ \hfil
& \vrule \vrule \hfil\  $#$\ \hfil
& \vrule \vrule\hfil\  $#$\ \hfil
&\vrule \vrule \hfil\  $#$\ \hfil
\vrule \vrule
 \cr
\noalign{\hrule \hrule}
\underset{\phantom{A}}{\overset{\phantom{A}}{\wh H}} &
\SO_n &
 \U_n\hfil\vrule\hfil\SU_n &
\Sp_n \Sp_1\hfil\vrule\hfil \Sp_n \U_1\hfil\vrule\hfil\Sp_n  & \operatorname{G}_2 & \Spin_7 & \Spin_9\cr
\noalign{\hrule\hrule}
\underset{\phantom{A}}{\overset{\phantom{A}}{\wh K}}  & \SO_{n-1} & \U_{n-1}\ \  \hfil\vrule\hfil\ \  \SU_{n-1} & \ \Sp_{n-1} \Sp_1\ \ \vrule\hfil\ \ \Sp_{n-1} \U_1\ \vrule\hfil\ \ \  \Sp_{n-1}
 & \SU_3 & \operatorname{G}_2 & \Spin_7 \cr\noalign{\hrule\hrule}
\underset{\phantom{A}}{\overset{\phantom{A}}{\wh H/\wh K}} & S^{n-1} & S^{2n-1} & S^{4n-1}
 & S^6 & S^7 & S^{15} \cr\noalign{\hrule\hrule}
}}
}
\vskip - 0.1 cm
\centerline{\tiny\bf Table 1}
\medskip

Therefore the triple ($H_1,K,H_2$) is a so called {\it admissible triple} of subgroups, i.e. three subgroups
such that $K\subseteq H_1$, $H_2$ and so that $H_i/K$, $i=1,2$, is diffeomorphic to a sphere. Conversely, for any admissible triple of subgroups of $G$ one can construct a $G$-manifold $M$ of cohomogeneity one with singular
and principal isotropy subgroups equal to the given triple (see e.g. \cite{GWZ}). Two $G$-manifolds of cohomogeneity one are $G$-diffeomorphic if and only if their associated triples are obtained one from the other by a suitable combination of the following operations: \begin{itemize}
\item[i)] switch $H_1$ and $H_2$;
\item[ii)] conjugate every subgroup $K,H_1,H_2$ by the same element of $G$;
\item[iii)] conjugate $H_1$ by an element of the normalizer $N_G(K)^o$.
\end{itemize}
\par
\smallskip
Let us  denote by $\xi$ a unit vector field defined on the open subset $M_{\operatorname{reg}}\subset M$ of regular points orthogonal to all $G$-orbits; Its integral curves are normal geodesics.
Any regular orbit $G\cdot p = G/K$ admits a tubular neighborhood which is $G$-equivariantly isometric to
the product $]a,b[\times G/K$, $]a,b[\subset \bR$, endowed with the metric
$$\bar g = dt^2 + g_t,$$
where $g_t$ is a smooth family of $G$-invariant metrics on $G/K$ and the vector field $\partial/\partial t$ corresponds to $\xi$. The metrics $g_t$ can be described as follows:
fix an $\Ad_K$-invariant decomposition $\gg = \gk + \gm$ so that, for any $t$, $g_t$ can be identified with an $\Ad_k$-invariant
scalar product on $\gm$. Therefore the family $g_t$ can be seen as a smooth map $g_t:]a,b[\to S^2_+(\gm)^K$ into the space $S^2_+(\gm)^K$ of $\Ad_K$-invariant symmetric positively defined bilinear maps on $\gm$.\par\medskip
\subsection{6-dimensional NK manifolds} \label{prelim2}
Given a NK manifold ($M,g,J$),  we  denote by $\omega = g(J\cdot, \cdot)$ the associated K\"ahler form.

We recall that
\beq
d\omega(X,Y,Z) = 3 g((\nabla_X J)Y,Z),\qquad X,Y,Z\in \gX(M). \eeq
Let us focus on the six dimensional non-K\"ahler NK manifolds. We recall that these manifolds are strict and Einstein with positive scalar curvature. In particular they have finite fundamental group whenever complete.
Moreover, for their  group of isometries the following holds (\cite{M} Prop 3.1).
\begin{lemma}\label{spinor} If $G$ is a connected group of isometries  of a compact, non-K\"ahler NK manifold $(M,g,J)$ of dimension six, then $G$ preserves $J$ unless $(M,g)$ is isometric to the standard sphere $S^6$.\end{lemma}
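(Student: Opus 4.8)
The plan is to use the spinorial description of nearly K\"ahler structures in dimension six. First I would reduce to the simply connected case. Since a six-dimensional non-K\"ahler NK manifold is Einstein with positive scalar curvature, $\pi_1(M)$ is finite, so the universal cover $(\widetilde M,\widetilde g,\widetilde J)$ is again a compact six-dimensional non-K\"ahler NK manifold; as $G$ is connected and $\widetilde M$ simply connected, the $G$-action lifts to $\widetilde M$, the covering is $G$-equivariant, and $\widetilde J$ is the pull-back of $J$. Hence it suffices to prove the statement on $\widetilde M$, with the following caveat handled separately: if $(\widetilde M,\widetilde g)$ is the round $S^6$, then the deck group $\Gamma$ preserves $\widetilde J$ and therefore lies in $\mathrm{Aut}(S^6,\widetilde g,\widetilde J)=\mathrm{G}_2\subset \SO_7$; since every element of $\SO_7$ has $+1$ as an eigenvalue, every element of $\mathrm{G}_2$ fixes a point of $S^6$, so $\Gamma$ acts freely only if $\Gamma$ is trivial, i.e. $M=S^6$.

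So assume $M$ simply connected. By the correspondence between NK structures and unit real Killing spinors on simply connected compact six-manifolds (\cite{Gr}), $J$ is obtained from a unit real Killing spinor $\psi_J$ of a fixed Killing constant, say $\tfrac12$, through a natural algebraic formula $\psi\mapsto J_\psi$ built from Clifford multiplication and the spinor inner product; in particular, for every isometry $f$ of $(M,g)$ one has $f^{*}J=J_{f^{*}\psi_J}$, where $f^{*}\psi_J$ is the pull-back of $\psi_J$ under the induced action on spinors. Let $V$ be the real vector space of Killing spinors of constant $\tfrac12$, so $\dim V\ge 1$. As $M$ is simply connected and spin, the $G$-action lifts to an orthogonal action on $V$ of $G$, or of a connected two-fold cover of $G$.

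Now I would split according to $\dim V$. If $\dim V=1$, a connected group acting orthogonally on $\mathbb R$ acts trivially, so the lifted action fixes $\psi_J$ (up to a sign which is irrelevant since $J_{\psi_J}$ is quadratic in $\psi_J$); hence $f^{*}J=J_{\psi_J}=J$ for every $f\in G$, i.e. $G$ preserves $J$. It remains to rule out $\dim V\ge 2$ unless $(M,g)$ is the round $S^6$. For this I would pass to the Riemannian cone $C(M)$ of dimension seven: two linearly independent Killing spinors of the same constant on $M$ yield two linearly independent parallel spinors on $C(M)$. But in dimension seven the only irreducible Riemannian holonomy groups are $\SO_7$, which admits no parallel spinor, and $\mathrm{G}_2$, whose space of parallel spinors is one-dimensional; hence $C(M)$ cannot have irreducible holonomy, and, being a metric cone, it must then be flat, so $(M,g)$ is the round $S^6$. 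This closes the argument.

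The only genuinely non-formal ingredient, and the step I expect to be the main obstacle to pin down rigorously, is this last one --- that ``more than one real Killing spinor of a fixed constant in dimension six'' forces the round metric; it rests on B\"ar's holonomy analysis of the metric cone together with Gallot's theorem that a Riemannian cone is either flat or has irreducible holonomy. Everything else --- the naturality of $\psi\mapsto J_\psi$, the lift of isometries to the spinor bundle and the induced orthogonal action on $V$, and the behaviour under passing to the universal cover --- is routine once these facts are available.
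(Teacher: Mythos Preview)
The paper does not actually prove this lemma: it simply quotes it from \cite{M}, Proposition~3.1. So there is no ``paper's own proof'' to compare against, only the cited reference.

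Your sketch is correct and is precisely the standard argument, essentially the one behind the cited result. The reduction to the simply connected case is clean, including your observation that any quotient of the round $S^6$ by a group preserving the NK structure would sit in $\mathrm{G}_2\subset\SO_7$, where every element has a fixed point. The core step---the space $V$ of real Killing spinors with the given constant is one-dimensional unless $(M,g)$ is the round sphere---is exactly B\"ar's cone argument combined with Gallot's dichotomy (a metric cone over a complete base is flat or has irreducible holonomy) and the fact that in dimension~$7$ the only nontrivial irreducible special holonomy is $\mathrm{G}_2$, whose parallel-spinor space is one-dimensional. Once $\dim V=1$, a connected group acting orthogonally on a line acts trivially, and the naturality of $\psi\mapsto J_\psi$ under spin isometries gives the conclusion. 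The only point to be slightly careful about is the lift of the isometry action to the spinor bundle: as you note, one may have to pass to a connected double cover of $G$, but this is harmless for the argument.
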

The following lemma is a generalization of the result in \cite{G3}.
\begin{lemma}\label{subm} A 6-dimensional compact,
non-K\"ahler,  NK  manifold ($M,g,J$) admits no almost complex four-dimensional submanifold.\end{lemma}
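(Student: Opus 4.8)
The plan is to exploit the global $\SU_3$-reduction carried by a $6$-dimensional strict NK manifold. As recalled in \S\ref{prelim2}, the structure group of $(M,g,J)$ reduces to $\SU_3$ and $c_1(M,J)=0$, so $M$ admits a global, nowhere-vanishing complex $(3,0)$-form $\Psi=\Psi^++i\Psi^-$, with $\Psi^+$ a multiple of $d\omega$. For the proof I need only two standard facts about this $\SU_3$-structure (see e.g. \cite{Gr}): that $\Psi$ is of type $(3,0)$, and that the remaining structure equation reads $d\Psi^-=c\,\omega\wedge\omega$ for a real constant $c$ which is nonzero precisely because $(M,g,J)$ is non-K\"ahler. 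In particular $\omega\wedge\omega$ is an exact $4$-form on $M$, with $\tfrac1c\Psi^-$ as a primitive.

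Now suppose, towards a contradiction, that $\iota\colon N\hookrightarrow M$ is an almost complex four-dimensional submanifold, i.e. $J(T_pN)=T_pN$ for every $p$. Then $\iota$ is holomorphic, $d\iota\circ J|_{TN}=J\circ d\iota$, and this alone — no integrability of $J$ is needed — forces $\iota^\ast$ to carry a form of pointwise bidegree $(p,q)$ on $M$ to a form of the same bidegree on $N$. Since $\dim_\bC N=2$, there are no nonzero $(3,0)$-forms on $N$; hence $\iota^\ast\Psi=0$ and therefore $\iota^\ast\Psi^-=\Im(\iota^\ast\Psi)=0$. On the other hand, as $TN$ is $J$-invariant and $g$ is Hermitian, the induced metric on $N$ is Hermitian with fundamental form exactly $\omega_N:=\iota^\ast\omega$; thus $\omega_N$ is nondegenerate and $\omega_N\wedge\omega_N$ is, up to a nonzero constant, the Riemannian volume form of $N$, in particular it vanishes nowhere.

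The contradiction is then immediate from $\omega_N\wedge\omega_N=\iota^\ast(\omega\wedge\omega)=\tfrac1c\,\iota^\ast(d\Psi^-)=\tfrac1c\,d(\iota^\ast\Psi^-)=0$, which is impossible for a volume form; hence no such $N$ exists. I do not expect a genuine obstacle here: the argument is short once the $\SU_3$-structure equations are recorded, and it works verbatim for immersed and for non-compact $N$ (for compact $N$ one could equally invoke Stokes, since $\int_N\omega_N\wedge\omega_N\ne 0$ while $\omega_N\wedge\omega_N$ is exact). The only points that deserve care are the classical structure identities — that $\tfrac13 d\omega$ and a nonzero multiple of its Hodge dual are the real and imaginary parts of a $(3,0)$-form, and that $d\Psi^-$ is a nonzero multiple of $\omega\wedge\omega$ — together with the elementary remark that a holomorphic map respects pointwise bidegrees of forms even when $J$ is not integrable.
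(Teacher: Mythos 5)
Your argument is correct, but it is genuinely different from the one in the paper. The paper follows Gray's original line for $S^6$: it first quotes Gray's lemmas to conclude that a $4$-dimensional almost complex submanifold $N$ would be K\"ahler, then runs a maximization argument on the second fundamental form $h$ to produce unit vectors $v,w$ with $g(v,w)=g(v,Jw)=0$ and $(\nabla_vJ)w=0$, which contradicts the fact that $6$-dimensional strict NK manifolds are of constant type. You instead use the global $\SU_3$-structure: $\Psi=\Psi^++i\Psi^-$ of type $(3,0)$ with $\Psi^+$ proportional to $d\omega$, the structure equation $d\Psi^-=c\,\omega\wedge\omega$ with $c\neq 0$ in the strict case, the pointwise fact that a pseudoholomorphic embedding preserves bidegree (so $\iota^*\Psi^-=0$ since $\dim_{\bC}N=2$), and the nondegeneracy of $\iota^*\omega$; pulling back $d\Psi^-=c\,\omega\wedge\omega$ then kills the volume form of $N$, a contradiction. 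Both proofs use strictness in an essential way (constant type $\alpha>0$ in the paper, $c\neq 0$ for you) and neither actually needs compactness of $M$ or $N$. What your route buys is brevity and robustness: no appeal to Gray's K\"ahler-submanifold lemmas, no second-fundamental-form analysis, and it works verbatim for immersed and noncompact $N$. What it costs is that you must quote the full NK structure equations for the canonical $\SU_3$-structure (in particular that $\tfrac13 d\omega$ and its partner $\Psi^-$ are globally defined real and imaginary parts of a $(3,0)$-form and that $d\Psi^-$ is a nonzero multiple of $\omega\wedge\omega$); these are standard, but Grunewald is not the cleanest reference --- the structure equations are more explicitly stated in the nearly K\"ahler literature the paper already cites (e.g. \cite{B}, or \cite{A}), so you should adjust the citation accordingly.
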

\begin{proofwithnoend} Let $N\subset M$ be a $4$-dimensional almost complex submanifold. In \cite{G3}, Lemmas 2, 3, it is proved that $N$, endowed with the induced Hermitian structure, is K\"ahler. This implies that for any $X,Y\in \gX(N)$
\beq\label{nabla}(\nabla_XJ)Y = h(X,JY) - Jh(X,Y),\eeq
and
\beq\label{asym}h(X,JY) + h(JX,Y) = 2 Jh(X,Y)\eeq
where $h$ denotes the second fundamental form. We select a point $p\in N$ and a unit vector $v\in T_pN$ such that
$$||h(v,v)||^2 = \max_{w\in T_pN;\ ||w||=1}||h(w,w)||^2\ .$$ This implies that for every $w\in T_pN$ with $g(v,w) = 0 $ we have $g(h(v,v),h(v,w)) = 0$. Since we can also use $Jv$ instead of $v$, for every $w\in T_pN$ with $g(Jv,w)=0$, we have $g(h(Jv,Jv),h(Jv,w)) = 0$. Using now  \eqref{asym}, for any $w\in T_pN$ with $g(v,w) = g(Jv,w) = 0$, we have
$$2 g(Jh(v,v),h(v,w)) = - g(h(v,Jw),h(v,v)) - g(h(Jv,w), h(v,v)) = $$
$$ = - g(h(v,Jw),h(v,v)) + g(h(Jv,w), h(Jv,Jv)) = 0.$$
Hence the normal vector $h(v,w)$ is orthogonal to $h(v,v)$ and $Jh(v,v)$. These two vectors span the normal space
$T_pN^\perp$ if $h$ is not zero, hence $h(v,w) = 0$. Similarly $h(v,Jw) = 0$.
 So, by  \eqref{nabla}, we have the existence of unit tangent vectors $v,w$ with $g(v,w) = g(v,Jw) = 0$ and $(\nabla_vJ)w = 0$.
On the other hand, this is impossible, as 6-dimensional strict NK manifolds are of constant type (\cite{G2}, Thm. 5.2), i.e. there is a positive constant $\alpha\in \bR$ such that for every $x,y\in T_q(M)$, $q\in M$,
$$||(\nabla_xJ)y||^2 = \alpha\left(||x||^2||y||^2 - g(x,y)^2 - g(Jx,y)^2\right).\eqno{\qed}
$$\end{proofwithnoend}
\par
\medskip
  We conclude  describing  known examples of cohomogeneity one actions on 6-dimensional non-K\"ahler NK manifolds. These are indeed the cases considered  in  Theorem \ref{T1}. \par
  \begin{itemize}
  \item[i)] The sphere  $S^6$ has a $\G_2$-invariant NK structure and the subgroup $G = \SO_4\subset \G_2$ acts on
  $S^6 = \G_2/\SU_3$ by cohomogeneity one. Using the fact that  the corresponding action
  of $\SO_4$ on $\bR^7 = \bR^3 + \bR^4$ acts on the summands $\bR^3$ and $\bR^4$ with
  the standard $\SO_3$- and $\SO_4$-representations,  one can see  that the associated
  triple of this cohomogeneity one action on $S^6$  is ($\SO_3,\T^1,\U_2$).
  The subgroup $\SU_3\subset \G_2$ also acts on $S^6$ by cohomogeneity one with
  two isolated fixed points and associated triple ($\SU_3,\SU_2,\SU_3$).\par
\item[ii)] The complex projective space $\bC P^3$ has a strict NK structure, invariant by the
standard $\Sp_2$-action on it. One can immediately check that the diagonal subgroup
$G = \Sp_1\times \Sp_1 \subset \Sp_2$ acts on $\bC P^3$ by cohomogeneity one with associated triple  ($\T^1\times\Sp_1,\T^1_{\operatorname{diag}},\Sp_1\times\T^1$).\par
\item[iii)]The $3$-symmetric space $S^3\times S^3$ is homogeneous under the following action of  $\SU_2^3$: for any $(g_1,g_2,g_3)\in \SU_2^3$ and $(x_1,x_2)\in \SU_2^2\cong S^3 \times S^3$
$$(g_1,g_2,g_3)\cdot(x_1,x_2) = (g_1x_1g_3^{-1}, g_2x_2g_3^{-1}).$$
The group $G =\SU_2^2$,  embedded in $\SU^3_2$ by the homomorphism  $ (g,h) \in \SU_2^2\longmapsto (g,h,g)\in \SU_2^3$,  acts on $S^3 \times S^3$ by cohomogeneity one with associated triple  ($(\SU_2)_{\operatorname{diag}},\T^1_{\operatorname{diag}},(\SU_2)_{\operatorname{diag}}$).\par
\end{itemize}
\par
\bigskip
\section{ Proof of Theorem \ref{T1}}
\label{PT1}
\setcounter{equation}{0}
In all the following, $(M, g, J)$ is a simply connected, compact, 6-dimensional, non-K\"ahler NK manifold and $G$ acts  almost effectively as a group of isometries of $(M,g)$ with principal orbit of codimension 1. By Lemma \ref{spinor} we can suppose that  $G$ preserves the almost complex structure $J$. We fix a principal point $p\in M$ and we denote $K = G_p$. We start with the following:
\begin{lem} \label{provina} The subgroup $K$ is isomorphic to a compact subgroup of $\SU_2$ and hence $\gk$ is $\{0\}$, $\bR$ or $\su_2$.
\end{lem}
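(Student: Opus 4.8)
The plan is to identify the isotropy representation of $K=G_p$ on $T_pM$ and show that, up to a finite kernel, it lands in a copy of $\SU_2$. Since $p$ is a regular point and the unit field $\xi$ is $G$-invariant (\S\ref{prelim1}), $K$ fixes $\xi_p$; as it also preserves $J$, it fixes $J\xi_p$, and hence preserves the $J$-invariant $4$-dimensional subspace $\cD_p:=(\bR\xi_p\oplus\bR J\xi_p)^\perp\subset T_pM$ together with the restrictions of $g$ and $J$ to it. Thus the isotropy representation restricts to a homomorphism $K\to\U(\cD_p,J)\cong\U_2$ whose kernel is the (finite) ineffectivity kernel of the action; this already gives $\gk\hookrightarrow\mathfrak{u}_2$, and the real work is to exclude $\gk=\mathfrak{u}_2$.

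For that I would bring in the endomorphism $A:=(\nabla_{\xi_p}J)|_{\cD_p}$ of $\cD_p$. Using $d\omega(X,Y,Z)=3g((\nabla_XJ)Y,Z)$ and the nearly K\"ahler identity $(\nabla_XJ)\circ J=-J\circ(\nabla_XJ)$ one checks that $A$ maps $\cD_p$ into itself, anticommutes with $J$, and is $g$-skew-adjoint (the last because $g(Av,w)=\tfrac13 d\omega(\xi_p,v,w)$ is alternating in $v,w$). Since a $6$-dimensional strict nearly K\"ahler manifold is of constant type (the fact already used in Lemma~\ref{subm}), one gets $\|Av\|^2=\alpha\|v\|^2$ for $v\in\cD_p$, so $\tilde A:=\alpha^{-1/2}A$ is orthogonal; being also skew-adjoint it satisfies $\tilde A^2=-\operatorname{Id}$. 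Hence $J$ and $\tilde A$ are anticommuting orthogonal complex structures on $\cD_p$, i.e. they endow $\cD_p\cong\bH$ with a quaternionic structure.

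Finally, because $K$ acts by isometries preserving $J$ and $\nabla$ and fixes $\xi_p$, the operators in the image of $K$ commute with $A$ — hence with $\tilde A$ — and with $J$; so that image lies in the centralizer in $\operatorname{O}(\cD_p)$ of the quaternionic structure $\{J,\tilde A,J\tilde A\}$, which is precisely the group $\Sp_1\cong\SU_2$ of right multiplications by unit quaternions. Therefore $K$, modulo the finite ineffectivity kernel (trivial when the $G$-action is effective), is isomorphic to a compact subgroup of $\SU_2$, so $\gk$ is a subalgebra of $\su_2$, i.e. $\{0\}$, $\bR$, or $\su_2$. I expect the only delicate step to be the bookkeeping for $A$ in the second paragraph — that it restricts to $\cD_p$ and has the three stated algebraic properties; everything after that is formal linear algebra over $\bH$.
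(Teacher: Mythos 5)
Your argument is correct, but it takes a genuinely different route from the paper's. The paper's proof is two lines: since the structure is non-K\"ahler, $d\omega_p\neq 0$, and $K$ preserves the triple $(g_p,J_p,d\omega_p)$, whose stabilizer is $\SU_3$ (this is the reduction of the structure group to $\SU_3$ recalled in the introduction, using that $d\omega$ is the real part of a complex volume form of constant length); fixing in addition $\xi_p$ and $J\xi_p$ then cuts $\SU_3$ down to $\SU_2$ acting on the orthogonal $4$-plane. You never invoke the $\SU_3$-structure: starting from the same observation that $K$ fixes $\xi_p$, $J\xi_p$ and hence acts on $\cD_p$ through $\U_2$, you replace the full $3$-form by its contraction $A=(\nabla_{\xi_p}J)|_{\cD_p}$ (which is $\tfrac13\,\iota_{\xi_p}d\omega$ restricted to $\cD_p$), use Gray's constant-type theorem to normalize it to a complex structure anticommuting with $J$, and identify the image of $K$ inside the centralizer of the resulting quaternionic structure, i.e.\ $\Sp_1\cong\SU_2$. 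Your bookkeeping for $A$ is sound ($A\xi_p=AJ\xi_p=0$ plus skew-adjointness gives $A(\cD_p)\subseteq\cD_p$; isometries preserving $J$ and fixing $\xi_p$ commute with $A$), and your care with the almost-effectiveness kernel is if anything more scrupulous than the paper, which identifies $K$ with its isotropy image. What the two approaches buy: the paper's is shorter but quotes the pointwise $\SU_3$-stabilizer fact (equivalently, the type $(3,0)+(0,3)$ of $d\omega$); yours is longer but self-contained modulo the constant-type theorem, which the paper uses anyway in Lemma \ref{subm}. Both yield $\gk\subseteq\su_2$, hence $\gk\in\{0,\bR,\su_2\}$.
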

\begin{proof} Since the NK structure is non-K\"ahler, $d\omega_p\not=0$ and therefore $K$, that preserves
$g_p$, $J_p$ and $d \omega_p$, must be a subgroup of $\SU_3$. Since it preserves also the  unit vector $\xi_p$,  normal to the orbit $G \cdot p$,   and the vector  $J\xi_p$,  it follows that  $K \subseteq \SU_2$.
\end{proof}
From Lemma \ref{provina} and the fact  $\dim \gg - \dim \gk = \dim G \cdot p = 5$ for any principal point $p$,  we may easily determine the  possibilities for the compact Lie algebra $\gg$, namely:
\begin{itemize}
\item[--] if $\gk = \{0\}$, then $\gg = \su_2+ 2\mathbb R$ or  $5\mathbb R$;
\item[--] if $\gk = \mathbb R$, then $\gg = \su_2+\su_2$, $\su_2+3\mathbb R$ or $6\mathbb R$;
\item[--] if $\gk = \su_2$, then \ $\gg = 2\su_2 + 2\mathbb R$,  $\su_2+5\mathbb R$ or $\su_3$.
\end{itemize}
On the other hand, some of these possibilities can be immediately excluded.
First of all, the pairs  ($\gg$,$\gk$)=($6 \mathbb R, \mathbb R$) and  ($\su_2+5\mathbb R$, $\su_2$) cannot occur, because in those cases   $\gk$ would be an ideal of  $\gg$, the
principal isotropy would   act trivially on $M$ and the action would not be almost effective.  Secondly,
the pair ($\gg$,$\gk$)= ($\su_2+\su_2+2\mathbb R$, $\su_2$) is not admissible because in this case the fixed point set  $T_p(G\cdot p)^{K^o}$ is three dimensional and $J$-invariant, a contradiction. Finally, also the pair ($\gg$,$\gk$)=($5 \mathbb R, \{0\}$) can be ruled out.  In fact, since $\gg$ is abelian, the two singular orbits should be both diffeomorphic to  $\T^4$.  Since $M$ is the union of two disk bundles over the singular orbits,  Seifert-VanKampen theorem implies that $\pi_1(M)$ is infinite, a contradiction. In addition we have the following lemma.
\begin{lem} The pairs  $(\gg,\gk) = (\su_2+3\mathbb R,\bR)$ and $(\su_2+ 2\mathbb R,\{0\})$ are not admissible.
\end{lem}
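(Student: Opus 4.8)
The two pairs are handled alike at the outset. In both, $\gg = \su_2\op\gt$ with $\gt$ a nontrivial \emph{central} ideal, of dimension $3$ in the first case and $2$ in the second. Since $G$ preserves $g$ and $J$, it preserves $d\omega$, and $d\omega_p\neq0$ as the structure is non-K\"ahler; because $d\omega$ is (a nonzero multiple of) the real part of the complex volume form of the $\SU_3$-structure defined by $(g,J)$, the group $G$ in fact preserves this $\SU_3$-structure, so every isotropy representation embeds into $\SU_3$. At a principal point $p$, Lemma \ref{provina} puts $K\subseteq\SU_2$, the stabiliser in $\SU_3$ of $\xi_p$; accordingly $T_p(G\cdot p)=\xi_p^{\perp}=\bR\,J\xi_p\op\bC^2$ as a $K$-module, with $K$ acting trivially on $\bR\,J\xi_p$ and by the restriction of the standard representation on $\bC^2$. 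Finally, $\gt$ being central, $\Ad_K$ acts trivially on it, so $X\mapsto\wh X_p$ sends $\gt$ into $\big(T_p(G\cdot p)\big)^{K}$; and this map is injective because $\gt\cap\gk=\{0\}$ --- otherwise $\gk$ would be a central ideal of $\gg$, $K$ would act trivially on $M$, and the action would not be almost effective.

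\smallskip
For $(\su_2+3\bR,\bR)$ this already finishes the proof: $K^\circ$ is a circle in $\SU_2$, hence conjugate to a maximal torus, which acts on $\bC^2$ with the two nonzero weights $\pm1$; therefore $\big(T_p(G\cdot p)\big)^{K^\circ}=\bR\,J\xi_p$ is $1$-dimensional, contradicting $\dim\gt=3$.

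\smallskip
For $(\su_2+2\bR,\{0\})$ the group $K$ is finite and the last count is empty, so one argues differently. \emph{First}, the two descriptions of $T_p(G\cdot p)$ as a $K$-module --- namely $\gg=\su_2\op\bR^2$ (adjoint on $\su_2$, trivial on $\bR^2$) and $\bR\,J\xi_p\op\bC^2$ --- force $\Ad_K|_{\su_2}\op\bR\cong\bC^2|_K$. Every nontrivial finite subgroup of $\SU_2$ contains $-I$; but $\Ad\op\bR$ is trivial on $-I$ while $-I$ acts as $-\Id$ on $\bC^2$, so necessarily $K=\{e\}$. \emph{Next}, with $K=\{e\}$ each singular isotropy $H_i$ is a compact connected Lie group diffeomorphic to a sphere $S^{k_i-1}$ with $k_i\geq2$, hence $H_i\cong S^1$ or $H_i\cong\SU_2$, the latter being (a conjugate of) the simple factor of $G$ since $\SU_2$ is perfect and $3$-dimensional. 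If some $H_i\cong\SU_2$, the van Kampen description $\pi_1(M)=\pi_1(G)\big/\langle\!\langle\,\mathrm{im}\,\pi_1(S^{k_1-1}),\ \mathrm{im}\,\pi_1(S^{k_2-1})\,\rangle\!\rangle$ gives a contradiction: $\pi_1(G)$ has free rank $2$ (its universal cover is $\SU_2\times\bR^2$), the sphere $S^{k_i-1}=S^3$ contributes nothing, and the other sphere contributes at most a cyclic subgroup, so $\pi_1(M)$ would be infinite rather than trivial.

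\smallskip
There remains the case $H_1\cong H_2\cong S^1$, i.e.\ two singular orbits of codimension two; this cannot be excluded on purely topological grounds and is, I expect, the main obstacle. Here one must invoke the nearly K\"ahler geometry: the aim is to show that a codimension-two singular orbit $N=G\cdot q$ is a $4$-dimensional almost complex submanifold --- equivalently, that its normal $2$-plane $\nu_q$ (which contains $\xi_q$) is $J$-invariant --- which is impossible by Lemma \ref{subm}. The invariance of $\nu_q$ should come out of analysing the slice representation of $H_i^\circ=S^1$ inside $\SU_3$ on $T_qM=\bC^3$ together with the behaviour of $J\xi$ along the normal geodesic as it reaches $N$ (ruling out the alternative in which $\nu_q$ is totally real with respect to $J$); a conceivable alternative is to contradict the constant-type identity $\|(\nabla_xJ)y\|^2=\alpha\big(\|x\|^2\|y\|^2-g(x,y)^2-g(Jx,y)^2\big)$ along such an orbit. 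Making this last step rigorous is where the real work lies.
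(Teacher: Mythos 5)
Your treatment of the first pair is correct and is genuinely different from the paper's: you get the contradiction directly at a principal point, by noting that the $3$-dimensional centre $\gt$ of $\su_2+3\bR$ injects $K$-equivariantly into $T_p(G\cdot p)$ while $\big(T_p(G\cdot p)\big)^{K^\circ}=\bR\, J\xi_p$ is only $1$-dimensional (the circle $K^\circ\subset\SU_2$ has no nonzero fixed vectors in $\bC^2$). The paper instead works at a singular orbit: it splits according to $\ker\nu|_\gk$, uses Borel's list to pin down the singular isotropy algebras, excludes $\gh=2\bR$ because the orbit would be a $4$-dimensional almost complex submanifold (Lemma \ref{subm}), and excludes the remaining cases because both singular orbits would be tori, forcing $\pi_1(M)$ to be infinite. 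Your argument is more elementary and avoids the singular-orbit case analysis for this pair.

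The second pair is where the proposal breaks down. First, a repairable slip: it is false that every nontrivial finite subgroup of $\SU_2$ contains $-\Id$ (e.g.\ the cyclic group of order $3$ generated by $\mathrm{diag}(\omega,\bar\omega)$, $\omega=e^{2\pi i/3}$); the conclusion you want can be saved by observing instead that your representation comparison forces a nonzero $K$-fixed vector in $\bC^2$, and any element of $\SU_2$ fixing a nonzero vector of $\bC^2$ is the identity, so $K$ acts trivially. Second, and decisively, the configuration $H_1\cong H_2\cong S^1$ (two codimension-two singular orbits) is not proved at all: you only sketch the intended strategy of showing such an orbit is a $4$-dimensional almost complex submanifold and invoking Lemma \ref{subm}, and you acknowledge that making it rigorous "is where the real work lies". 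Note that the obvious weight argument does not close it: if the circle $H^\circ$ is not central, its isotropy representation on $T_qM$ has a $2$-dimensional fixed part, a nonzero weight on a tangent $2$-plane and a nonzero slice weight, and when these two weights agree up to sign $J$ may a priori mix tangent and normal directions, so $J$-invariance of $T_qS$ does not follow from equivariance alone; one needs an additional geometric input (e.g.\ the behaviour of $J\xi$ along the normal geodesic as it hits the singular orbit, or the constant-type identity). Since the lemma asserts inadmissibility of $(\su_2+2\bR,\{0\})$ in full, this missing case is a genuine gap in the proposed proof.
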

\begin{proof} Let $(\gg,\gk)$ be $(\su_2+3\mathbb R,\bR)$. Note that the Lie algebra $\gk$ has a non trivial
projection onto the $\su_2$ factor because otherwise it would be contained in the center of $\gg$.
Let $S := G\cdot q$ be a singular orbit with isotropy subalgebra $\gg_q = \gh\supseteq \gk$. We denote by
$\nu:\gh\to \go(T_qS^\perp)$ the slice representation  and by
$\gk_o = \ker \nu|_{\gk}\subseteq \gk$.\par  The case $\gk_o = \gk = \mathbb R$ cannot occur by the following arguments.
Since $\gk = \gk_o$ is an ideal of $\gh$, using Borel's list we see that  $\gh/\gk$ is isomorphic to $\mathbb R$ or
$\su_2$. The latter  is excluded because $\gk$ would be in the centralizer of $\su_2$, hence in the center of $\gg$.
Therefore we have $\gh \cong 2\mathbb R$ and $\gh = \gk + \gh_o$ for some $\gh_o\subset \gz(\gg)$. Now, $\gh_o$ acts
non trivially on the normal space to $S$, while it acts trivially on the tangent space $T_qS$ because it is contained in
the center of $\gg$. Since the isotropy representation preserves $J$, the tangent space $T_qS = (T_qM)^{\gh_o}$ is $J$- invariant. This means that the orbit
$S$ is almost complex and four dimensional, which cannot be by Lemma \ref{subm}.\par
Therefore $\gk_o = \{0\}$ and from Borel's list, $(\gh,\gk) = (\so_3,\bR)$ or $(\gu_2,\bR)$.
This implies that both singular orbits are diffeomorphic to tori, contradicting
the fact that $M$ has finite fundamental group.\par
The case $(\gg,\gk) = (\su_2+ 2\mathbb R,\{0\})$ can be ruled out using similar arguments.\end{proof}
Therefore the possible pairs ($\gg,\gk$) reduce to
$$ (\gg,\gk) = (\su_2 + \su_2, \mathbb R ) \quad \operatorname{or}\quad (\gg,\gk) = (\su_3, \su_2).$$
We now focus on the case $\gg = \su_2 + \su_2$ and hence on  almost effective actions of $G = \SU_2 \times \SU_2$.
\begin{lem} \label{lemma33} If $(\gg,\gk) = (\su_2 + \su_2, \mathbb R )$, the Lie algebra $\gk$ is, up to an automorphism of $\gg$, diagonally embedded into a Cartan subalgebra of $\gg$.
\end{lem}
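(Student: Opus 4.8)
The plan is to pick a generator $X=(X_1,X_2)$ of $\gk$ inside $\su_2\oplus\su_2$ and to prove that both components $X_1,X_2$ are nonzero; the lemma then follows at once. Indeed, since $\su_2$ has rank one, every nonzero element of $\su_2$ spans a maximal torus, so $\bR X_1\oplus\bR X_2$ is a Cartan subalgebra of $\gg$ containing $\gk$, onto whose two summands $\gk$ projects isomorphically; as all maximal tori of $\su_2$ are conjugate, an automorphism of $\gg$ brings this Cartan subalgebra into the form $\gt_o\oplus\gt_o$ for a fixed maximal torus $\gt_o\subset\su_2$, and $\gk$ becomes a diagonal line (i.e.\ it still projects isomorphically onto each summand).

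Hence the substantive point is to exclude the case that one component, say $X_2$, vanishes, i.e.\ $\gk=\bR(X_1,0)$ with $X_1\neq 0$. I would derive a contradiction at a principal point $p$ by computing the dimension of the $K^o$-fixed subspace of the isotropy module $T_p(G\cdot p)$ in two ways. On one hand, the orbit map identifies $T_p(G\cdot p)$, $K$-equivariantly, with an $\Ad_K$-invariant complement $\gm$ of $\gk$ in $\gg$; taking $\gm=(\bR X_1)^\perp\oplus\su_2$ (orthogonal complement in the first factor with respect to $\cB$) and using that the centralizer of $X_1$ in $\su_2$ is exactly $\bR X_1$, the circle $K^o$ acts on the first summand by a nontrivial rotation and trivially on the second, whence $\dim\gm^{K^o}=3$. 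On the other hand, $K^o$ fixes $\xi_p$ and, since $G$ preserves $J$, also $J\xi_p\in T_p(G\cdot p)$, so it preserves the $J$-invariant $4$-plane $W=(\bR\xi_p\oplus\bR J\xi_p)^\perp$ and, by Lemma \ref{provina}, acts on $W\cong\bC^2$ inside $\SU_2\subset\SU_3$. Since the $G$-action is almost effective, $K^o$ cannot act trivially on $W$ (otherwise it would fix $T_pM$ pointwise and hence act trivially on $M$), so it is a maximal torus of $\SU_2$ and has no nonzero fixed vector in $W$; therefore $T_p(G\cdot p)^{K^o}=\bR J\xi_p$ is one-dimensional, contradicting $\dim\gm^{K^o}=3$.

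The step to get right is this last dimension count: one must be sure that $K^o$ genuinely acts as a nontrivial circle of $\SU_2$ on $W$ — which is exactly where the hypothesis that the $G$-action is almost effective enters — and that the identification $T_p(G\cdot p)\cong\gm$ respects the $K^o$-action; the remaining ingredients (the list of one-dimensional subalgebras of $\su_2\oplus\su_2$ up to automorphism and the conjugacy of maximal tori in $\su_2$) are routine. As an alternative route for the excluded case one could instead use that a three-dimensional subalgebra of $\su_2\oplus\su_2$ is either $\su_2\oplus 0$, $0\oplus\su_2$, or the graph of an automorphism of $\su_2$, so that $\gk=\bR(X_1,0)$ would force both singular isotropy algebras $\gh_i$ to equal $\su_2\oplus 0$, and then reach a contradiction from the resulting structure of the two singular orbits (both $\cong S^3$); but the direct computation at the principal orbit above is shorter.
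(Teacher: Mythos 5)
Your argument for excluding $\gk\subset\su_2\oplus 0$ is correct (and is a reasonable variant of the paper's: the paper notes that in this case $M^{K^o}$ would be a $4$-dimensional almost complex submanifold, contradicting Lemma \ref{subm}, while you count $K^o$-fixed vectors at a principal point; both work). The problem is that what you then conclude is strictly weaker than the lemma. After your argument you only know $\gk=\bR(X_1,X_2)$ with $X_1,X_2\neq 0$, i.e.\ a line of \emph{some} slope inside the Cartan subalgebra $\gt=\mathrm{pr}_1(\gk)\oplus \mathrm{pr}_2(\gk)$. An automorphism of $\gg=\su_2+\su_2$ preserving $\gt$ can only change signs in the two coordinates and swap the factors; it cannot change the ratio $\|X_1\|/\|X_2\|$. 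So ``projects isomorphically onto each summand'' does not give ``diagonally embedded up to an automorphism'', and the strong form is exactly what the paper uses afterwards: it is what permits $(\su_2)_{\operatorname{diag}}$ as a singular isotropy algebra and forces $K=\T^1_{\operatorname{diag}}$ in the three admissible triples. Your sentence ``$\gk$ becomes a diagonal line (i.e.\ it still projects isomorphically onto each summand)'' silently replaces the statement by this weaker one.

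The missing step is where the non-K\"ahler hypothesis enters, and it is the bulk of the paper's proof: if $\|X_1\|\neq\|X_2\|$, the circle $K$ rotates the two root spaces $\gn_1,\gn_2$ with different speeds, so the three irreducible $\gk$-modules $\ga,\gn_1,\gn_2$ of $\gm$ are mutually inequivalent. Then every invariant metric has the diagonal form $dt^2+f^2\,\cB|_{\ga\times\ga}+h_1^2\,\cB|_{\gn_1\times\gn_1}+h_2^2\,\cB|_{\gn_2\times\gn_2}$, and the space of $K$-invariant $3$-forms at a principal point reduces to $(\bR\xi+\wh\ga)\otimes\bigl(\Lambda^2\wh\gn_1+\Lambda^2\wh\gn_2\bigr)$; a short computation along the normal geodesic shows that the only surviving components $\psi(\xi,\wh\gn_i,\wh\gn_i)$ and $\psi(\wh\ga,\wh\gn_i,\wh\gn_i)$ of $\psi=d\omega=3g(\nabla J\,\cdot,\cdot)$ vanish, so the structure would be K\"ahler, contradicting strictness. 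Nothing in your proof plays this role --- apart from Lemma \ref{provina} you never use $d\omega\neq 0$ --- so your argument cannot rule out, say, a principal isotropy circle of slope $1{:}2$, which is not conjugate to $\T^1_{\operatorname{diag}}$ under any automorphism of $G$. To complete the proof you must add an argument of this kind (or otherwise exploit the NK condition) to pin the slope to $\pm1$.
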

\begin{proof} We  first observe that $\gk\cong \bR$ projects non trivially into each factor $\su_2$. In fact if $\gk$ is contained in some
factor $\su_2$, then the fixed point set $M^{K^o}$ would be $4$-dimensional and almost complex
and this is not possible by Lemma \ref{subm}. Then there is a unique Cartan subalgebra $\gt = pr_1(\gk) \oplus pr_2(\gk)$ containing $\gk$, where $pr_i$ denote the projections onto the $\su_2$-factors. Consider the $\cB$-orthogonal decompositions $\gt = \gk +\ga$ and $\gg = \gt + \gn_1+ \gn_2$, with $\gn_i$ contained in $\su_2$ factors, so that $\gm = \ga + \gn_1 + \gn_2$. \par
Since the $\gk$-modules $\ga,\gn_1,\gn_2$ are mutually non equivalent, we have that $S^2(\gm)^{\Ad(K)}$ is generated by $\cB|_{\ga\times \ga}$ and $\cB|_{\gn_i\times \gn_i}$ for $i=1,2$.
 According to the remarks in \S \ref{prelim1},
the metric $g$ in a tubular neighborhood $]a,b[ \times G/K$ of the principal orbit $G\cdot p = G/K$ is of the form
$$g= dt^2 + f(t)^2 \cB|_{\ga\times \ga} + h_1(t)^2 \cB|_{\gn_1\times \gn_1} +
h_2(t)^2 \cB|_{\gn_2\times \gn_2},$$
where  $f,h_1,h_2$ are smooth functions of $t\in ]a,b[$.
We now claim that $\nabla J = 0$, contradicting the fact that the NK structure is non-K\"ahler. In fact, we observe that $\Lambda^3(T_pM)^{K}$ is $(\bR\cdot \xi + \wh\ga) \otimes [\Lambda^2(\wh\gn_1) +
\Lambda^2(\wh\gn_2)]$, where $\xi= \frac{\partial}{\partial t}$. Hence the only possibly non zero
component of the $3$-form $\psi = d\omega = 3g(\nabla J \cdot,\cdot)$ are $\psi(\xi,\wh\gn_i,\wh\gn_i)$ and $\psi(\wh\ga,\wh\gn_i,\wh\gn_i)$.\par
 We show that $\psi(\xi,\wh\gn_i,\wh\gn_i) = 0$ and $\psi(\wh\ga,\wh\gn_i,\wh\gn_i) = 0$, from which the conclusion will
 follow.
 To do this, we observe that $J\wh \gn_i|_{\gamma(t)}= \wh\gn_i|_{\gamma(t)}$ and the induced complex structure $J_t\in \End(\gn_i)^{\Ad(K)}$ is constant since $\gn_i$ is two-dimensional. Pick $v,w\in \gn_i$ with $J\wh v = \wh w$ along $\gamma(t)$. Then
$$\psi(\xi,\wh v,\wh w) = g(\nabla_{\xi} \wh w, \wh w) - g(J\nabla_{\xi}\wh v,\wh w) = $$
$$= g(\nabla_{\xi} \wh w, \wh w)   - g(\nabla_{\xi}\wh v,\wh v) = \frac{1}{2} \frac{d}{dt}\left(g(\wh w,\wh w)  - g(\wh v,\wh v)\right)= 0.$$
In order to prove that $\psi(\wh\ga,\wh\gn_i,\wh\gn_i) = 0$, it is enough to note that $\psi$ is $J$-invariant and $J\wh\ga = \bR\cdot\xi$. \end{proof}
By the previous lemma and Borel's list, we see that a singular isotropy subalgebra $\gh\supsetneq \gk =\bR$ is either $\mathbb R^2$, $(\su_{2})_{\operatorname{diag}}$ or $\mathbb R +\su_2$, up to conjugation.
We claim
 that the case $\gh= \bR^2$ cannot occur. Indeed, if $\gh = \bR^2$ we see that the normal space to the singular orbit $\cO$ is the fixed point set of $K^o$, hence it is $J$-invariant. This means that $\cO$ is almost complex and $4$-dimensional, contradicting Lemma \ref{subm}. \par
We now claim that the admissible triple $(H_1, K,H_2)$ of the cohomogeneity one action consists of  connected groups.  Indeed,  every singular orbit has codimension greater than two and this implies that both singular orbits are simply connected, since each of them is a deformation retract of the complement of the other one. This implies that the two singular isotropy subgroups $H_1, H_2$ are connected. Since the sphere $\dim H_i/K$ have dimension greater than one, $K$ is connected too. \par
Hence the only possibilities of the triple ($H_1$, $K$, $H_2$) are, up to equivalence,  \begin{itemize}
\item[--] $(\T^1\times \SU_2, \T^1_{\operatorname{diag}},\SU_2\times \T^1)$;
\item[--] $((\SU_2)_{\operatorname{diag}},\T^1_{\operatorname{diag}},\SU_2\times \T^1)$;
\item[--] $((\SU_2)_{\operatorname{diag}},\T^1_{\operatorname{diag}},(\SU_2)_{\operatorname{diag}})$,
\end{itemize}
which correspond to the examples $\bC P^3$, $S^6$ and $S^3\times S^3$ of $\SU_2\times\SU_2$-manifolds presented  in \S\ref{prelim2}.\par
\medskip
We now deal with the case $\gg = \su_3$ and hence with  almost effective actions of $G = \SU_3$. Looking at Borel's list we see that a singular isotropy subalgebra $\gh$ containing $\gk = \su_2$ can be either $\gh =\su_3$ or $\gh = \su_2 \oplus \mathbb R$. In the latter case, the normal space  of the
singular orbit  is the fixed point set of the isotropy representation of ${K^o}$ and the singular orbit would be an almost  complex $4$-dimensional submanifold, which is impossible. Therefore both singular isotropy subalgebras coincide with $\su_3 = \gg$ and the singular orbits are fixed points. This means that there is only one possible triple ($H_1$, $K$, $H_2$), corresponding to the $\SU_3$-manifold $S^6$. \par\bigskip

\section{Proof of Theorem \ref{T2}.}
\setcounter{equation}{0}
\bigskip
By Lemma \ref{provina} and following remarks,  the only compact simple group, acting by cohomogeneity one as group of automorphisms of
the NK structure of  6-dimensional non-K\"ahler NK manifold $(M, g, J)$,  is $G = \SU_3$ with
principal isotropy $K = \SU_2$. \par
Consider the $\cB$-orthogonal decomposition of $\gg = \su_3$ into irreducible $\gk$-moduli
$\gg = \gk + \ga + \gn$, where  $ \dim \ga = 1$ and  $\gn \cong \bC^2$ is  the standard representation space of $\SU_2$. Notice  that $[\ga,\gk] = 0$ and $[\ga,\gn]\subset \gn$.  Let also  $A\in \ga$ so that
$J_o = \ad(A)|_{\gn}: \gn \to \gn $  coincides with an $\Ad_K$-invariant complex structure on $\gn$.\par
 By the observations in \S \ref{prelim1},
the metric $g$ in a tubular neighborhood $I \times G/K$, $I = ]a,b[$,  of a principal orbit $G\cdot p = G/K$ is of the form
\beq \label{metric} g = dt^2 + h^2 \cB|_{\ga\times\ga} + f^2 \cB|_{\gn\times\gn},\eeq
where $h,f$ are positive functions in $C^\infty(I)$   and where  $\partial/\partial t$ is identified with the unit vector field $\xi$, orthogonal to the $G$-orbits, and the curve $(t, e K) \in I \times G/K$ is identified with the  normal geodesic $\g(t)$.\par
Recall that the space of $\Ad_K$-invariant complex structures on $\gn = \bC^2$ is a $2$-sphere, isomorphic to the unit sphere of the imaginary quaternions, i.e.
$$S = \left\{\ \sum_{i=1}^3 a_i J_i\ :\ \sum_i a_i^2 = 1\ \right\}$$
where $J_i$ are anticommuting complex structures with $J_1J_2J_3 = -Id$. With no loss of generality we may assume that
$J_1 = J_o$.
\par
\smallskip
From $K$-invariance of $J$ we immediately have the following.
\begin{itemize}
\item[a)] For any $t$, the vector $J\xi|_{\g(t)}$ is $K$-fixed in the tangent space $V_t = T_{\g(t)}ÿ(G\cdot \g(t))$ to the $G$-orbit $G \cdot \g(t)$,
so that $J\xi|_{\g(t)}$ lies in the linear span of
$\wh A_{\gamma(t)}$.  We set $J\xi|_{\g(t)} = u(t)\cdot  \wh A|_{\g(t)}$ for some positive $u\in C^\infty(I)$;
\item[b)] $J$ induces an $\Ad_K$-invariant complex structure in $\gn$, say $J_t$, such that
for every $v\in \gn$ we have $J\wh v = \widehat{J_tv}$. By previous remarks,
$$J_t = \sum_{i}^3 a_i(t) J_i,$$
for some $a_i\in C^\infty(I)$ with $\sum_{i=1}^3 a_i^2 = 1$.
\end{itemize}
\medskip
We want now to  determine the equations that must  be satisfied by   the functions $h$, $f$ and $a_i$'s, so that $(g, J)$ is a NK structure.
The next four lemmata are devoted to the evaluation  of all components of  $\na J$ at the points of the normal geodesic $\g(t)$, by means of  which we will immediately determine  the equations we are looking for. \par
\smallskip
\begin{lem}\label{L1} The vector field $J\xi$ is of the form  $J\xi = u \wh A$ with $u^2 = \frac{1}{4 h^2}$ and the operators $\na_{\cdot} \xi$ and $\na_{\cdot}  J \xi$ satisfy the following identities:\par
\noindent (1)\ $\na\xi|_{\wh\gn} = \frac{f'}{f} Id$.\par
\noindent (2)\ $\na_\xi\xi = \na_\xi J\xi = 0$, $\na_{J\xi}\xi = \frac{h'}{h} J\xi$ and $ \na_{J\xi}J\xi =  - \frac{h'}{h} \xi$.\par
It follows that
\beq \label{zeroes} (\na_\xi J)\xi =  (\na_\xi J)J\xi = (\na_{J\xi}J)J\xi = (\na_{J\xi}J)\xi = 0\  .\eeq
\end{lem}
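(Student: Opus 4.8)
The strategy is a direct computation of the Levi-Civita connection of the cohomogeneity-one metric \eqref{metric} along the normal geodesic $\g(t)$, combined with the one extra input coming from the nearly K\"ahler condition: namely that $J\xi$ must be a \emph{unit} vector field, which pins down $u$. First I would record the standard facts about the metric $\bar g = dt^2 + g_t$. Since $\g$ is a geodesic meeting all orbits orthogonally, $\xi = \partial/\partial t$ satisfies $\na_\xi\xi = 0$. For Killing fields $\wh X, \wh Y$ arising from $\gg$, the normal component of $\na_{\wh X}\wh Y$ at $\g(t)$ is governed by the shape operator $S_t = -\tfrac12 g_t^{-1}\dot g_t$ of the orbit, while the tangential part is the Levi-Civita connection of $(G/K, g_t)$. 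Concretely, for $v,w\in\gm$ one has $g(\na_{\wh v}\wh w,\xi) = \tfrac12 \dot g_t(v,w)$ evaluated appropriately, and $g(\na_\xi \wh v, \wh w) = \tfrac12 \dot g_t(v,w)$ as well (the two Weingarten-type terms agree because $[\xi,\wh v]=0$). Applying this to $v,w\in\gn$, where $g_t|_{\gn\times\gn} = f^2\cB$, immediately gives statement (1): $\na_\xi|_{\wh\gn} = \tfrac{f'}{f}\,\mathrm{Id}$, and likewise on $\wh\ga$ the analogous term is $\tfrac{h'}{h}$.

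Next I would treat the plane spanned by $\xi$ and $\wh A$. Since $g(\wh A,\wh A) = h^2\cB(A,A)$ and $J\xi = u\wh A$ is required to have unit length, $u^2 h^2 \cB(A,A) = 1$; the paper has implicitly normalized $\cB(A,A) = 1/4$ (this comes from the structure constants of $\su_3$ with the chosen $\cB$-orthonormal decomposition — I would verify this by computing $[A,\cdot]$ on $\gn$ and using that $\ad(A)|_\gn$ is a complex structure, forcing $\cB(A,A)$ to a definite value), giving $u^2 = \tfrac{1}{4h^2}$. For the connection terms in this plane: $\na_\xi\wh A = \tfrac{h'}{h}\wh A$ by the computation above, hence $\na_\xi J\xi = \na_\xi(u\wh A) = u'\wh A + u\tfrac{h'}{h}\wh A = (u'+u\tfrac{h'}{h})\wh A$; but from $u = \tfrac{1}{2h}$ one gets $u' = -\tfrac{h'}{2h^2} = -u\tfrac{h'}{h}$, so $\na_\xi J\xi = 0$. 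Then $\na_\xi\xi = 0$ was already noted. For $\na_{\wh A}\wh A$: its $\xi$-component is $-g(\wh A,\na_{\wh A}\xi) = -\tfrac{h'}{h}g(\wh A,\wh A) = -\tfrac{h'}{h}h^2\cB(A,A) = -\tfrac{h'}{4h}$, while its tangential component vanishes since $[A,A]=0$ means $\wh A$ is (at $\g(t)$, within the abelian direction) auto-parallel along itself on the orbit. Rescaling, $\na_{J\xi}\xi = \na_{u\wh A}\xi = u\,\tfrac{h'}{h}\wh A = \tfrac{h'}{h}J\xi$, and $\na_{J\xi}J\xi = u\na_{\wh A}(u\wh A) = u(u'\wh A + u\na_{\wh A}\wh A)$; the $\wh A$-part $u u'$ contributes but must be checked to cancel — actually $\na_{J\xi}J\xi$ should equal $-\tfrac{h'}{h}\xi$, and indeed $\langle\na_{J\xi}J\xi,\xi\rangle = -\langle J\xi,\na_{J\xi}\xi\rangle = -\tfrac{h'}{h}|J\xi|^2 = -\tfrac{h'}{h}$, while the $J\xi$-component vanishes since $|J\xi|\equiv 1$. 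This gives statement (2).

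Finally, \eqref{zeroes} follows formally: each of the four expressions $(\na_X J)Y$ with $X,Y\in\{\xi,J\xi\}$ is computed from (2) together with $J\xi = u\wh A$ and $J(u\wh A) = -\xi$ (since $J^2=-\mathrm{Id}$ and $J$ is $K$-invariant, $J$ maps $\bR\xi\oplus\wh\ga$ to itself as a rotation by $\pi/2$). For instance $(\na_\xi J)\xi = \na_\xi(J\xi) - J\na_\xi\xi = 0 - 0 = 0$; $(\na_{J\xi}J)J\xi = \na_{J\xi}(JJ\xi) - J\na_{J\xi}J\xi = -\na_{J\xi}\xi - J(-\tfrac{h'}{h}\xi) = -\tfrac{h'}{h}J\xi + \tfrac{h'}{h}J\xi = 0$; and similarly for the mixed terms. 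The main obstacle I anticipate is bookkeeping the precise normalization constant $\cB(A,A) = 1/4$ and making sure the sign conventions in the second fundamental form / Koszul formula are consistent throughout; once that is fixed, everything is a short direct calculation, and indeed the nearly K\"ahler condition is not needed here except through the unit-length constraint on $J\xi$ (which itself follows from $J$ being a $g$-orthogonal structure, so strictly speaking this lemma holds for any $G$-invariant almost Hermitian structure of this form).
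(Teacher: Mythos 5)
Your proposal is correct and follows essentially the same route as the paper: identify $\na\xi$ on the orbit directions from the $t$-derivative of $g_t$ (Schur's lemma/shape operator, giving $\tfrac{f'}{f}$ on $\wh\gn$ and $\tfrac{h'}{h}$ on $\wh\ga$), then use $K$-invariance together with unit-norm and Killing-field identities to handle the $\{\xi,J\xi\}$-plane, and deduce \eqref{zeroes} formally from (2). One small correction: the implicit normalization is $\cB(A,A)=4$, not $\tfrac14$ --- since $\ad(A)|_{\gn}$ being a complex structure forces $A$ to be conjugate to $\mathrm{diag}(i/3,i/3,-2i/3)$ and $\cB=-6\tr$ on $\su_3$ --- which is exactly what gives $\|\wh A\|^2=4h^2$ and hence $u^2=\tfrac{1}{4h^2}$, consistent with the relation $u=\tfrac{1}{2h}$ that you use in the rest of your computation.
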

\begin{proof}  The first claim follows immediately from previous remarks and from  $1 = ||J\xi||^2 = u^2 ||\wh A||^2 = 4 u^2 h^2$. To see (1), observe that $\na\xi|_{\wh\gn}$ can be identified with   an element of $S^2(\gn)^{\Ad_K}$ at any point $\g(t)$, which is,  by Schur's Lemma,  of the form $\l(t) Id_\gn$. So for every $v\in \gn$ we have $\na_{\wh v}\xi = \lambda {\wh v}$  for some $\l \in \cC^\infty(I)$ and the claim  follows from
$$\lambda f^2 \cB(v,v) = g(\na_{\wh v}\xi, \wh v) = g(\na_\xi \wh v,\wh v) = \frac{1}{2}(||\wh v||^2)' = ff' \cB(v,v).$$
To check  (2), notice that $\na_\xi \xi = 0$ since $\xi$ is the tangent vector to the normal geodesic.  Moreover,  $\na_\xi J\xi$ is $K$-invariant and hence it is in the span of $\xi$, $J\xi$. Since  $g(\na_\xi J\xi,\xi) = -g(J\xi,\na_\xi\xi)=0$, we get $\na_\xi J\xi = 0$. Finally, since also   $\na_{J\xi}\xi$ and $\na_{J\xi}J \xi$  are $K$-invariant, the  last equalities  follows from $J \xi = u \wh A$, $g(\na_{J\xi}\xi,J\xi)  = - g(\na_{J\xi} J\xi,\xi)$  and
$$ g(\na_{J\xi}\xi,J\xi) = u^2g(\na_{\wh A}\xi,\wh A) = u^2g(\na_\xi\wh A,\wh A) = \frac{u^2}{2}(||\wh A||^2)' = 4 u^2 hh' \ .$$
Equalities \eqref{zeroes} are  direct  consequence  of  (2).
\end{proof}
\begin{lem}\label{NN}
  At any point of the geodesic $\gamma(t)$ and for any
$v \in \gn$
\beq\label{nn}(\na_{\wh v}J)\wh v = \left(\frac{uh^2}{2f^2} a_1 + \frac{f'}{f}\right) ||\wh v||^2 J\xi\ .
\eeq\end{lem}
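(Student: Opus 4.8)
The plan is to compute $(\na_{\wh v}J)\wh v = \na_{\wh v}(J\wh v) - J\,\na_{\wh v}\wh v$ at $\g(t)$, working with the orthogonal splitting $T_{\g(t)}M = \bR\xi\oplus\wh\ga\oplus\wh\gn$ (recall $J\xi = u\wh A$ spans the line $\wh\ga$, that $J$ interchanges $\bR\xi$ and $\wh\ga$, and that $J\wh\gn = \wh\gn$), and then to collect the pieces. Two elementary facts will be used repeatedly. First, $\cB(A,A) = 4$: this is exactly $1 = ||J\xi||^2 = u^2||\wh A||^2 = u^2h^2\cB(A,A)$ combined with $u^2 = \tfrac1{4h^2}$ from Lemma~\ref{L1}. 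Second, since $\gn\cong\bC^2$ is $K$-irreducible over $\bR$, the $\Ad_K$-invariant product $\cB|_{\gn\times\gn}$ is, up to scale, the standard one, so the anticommuting $\Ad_K$-invariant complex structures $J_1,J_2,J_3$ are $\cB$-skew and $\cB$-orthogonal; in particular $\cB(J_iv,J_jv) = \delta_{ij}\cB(v,v)$ (with the convention $J_0 := \Id$).

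For the term $\na_{\wh v}\wh v$: since $\wh v$ is a Killing field, $g(\na_{\wh v}\wh v,Y) = -\tfrac12 Y(||\wh v||^2)$ for every $Y$. Using $g = dt^2 + h^2\cB|_{\ga\times\ga} + f^2\cB|_{\gn\times\gn}$ together with $\Ad$-invariance of $\cB$ and $[\gn,\gn]\subseteq\gk+\ga$ (whose $\gk$-part is annihilated at $\g(t)$ and whose $\ga$-part is $\cB$-orthogonal to $\wh\gn$), one checks that all orbit-directional derivatives of $||\wh v||^2$ at $\g(t)$ vanish, while along $\xi$ one has $||\wh v||^2 = f(t)^2\cB(v,v)$. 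Hence $\na_{\wh v}\wh v|_{\g(t)} = -\tfrac{f'}{f}||\wh v||^2\,\xi$ — equivalently, the intrinsic part of $\na_{\wh v}\wh v$ vanishes (both $[v,v]_\gm$ and the relevant $U$-tensor term do) and only the shape operator of Lemma~\ref{L1}(1) survives — and therefore $J\,\na_{\wh v}\wh v = -\tfrac{f'}{f}||\wh v||^2\,J\xi$.

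The main point, and the one delicate step, is $\na_{\wh v}(J\wh v)$: here $J\wh v$ is \emph{not} a Killing field, so Killing-field formulas do not apply to it directly. The remedy is to use $G$-invariance of $J$ together with the fact that $\wh v$ generates $\exp(sv)$: along the orbit curve $s\mapsto\exp(sv)\cdot\g(t)$ one has $J\wh v = \widehat{\Ad_{\exp(sv)}(J_tv)}$, and differentiating covariantly at $s=0$, with $\Ad_{\exp(sv)}(J_tv) = J_tv + s[v,J_tv] + O(s^2)$, gives
\beq\label{auxplan} \na_{\wh v}(J\wh v)\big|_{\g(t)} = \na_{\wh v}\widehat{w}\,\big|_{\g(t)} + \widehat{[v,w]}\,\big|_{\g(t)},\qquad w := J_tv\in\gn .\eeq
For the bracket term, $[v,J_tv]\in[\gn,\gn]\subseteq\gk+\ga$, its $\gk$-part is annihilated at $\g(t)$, and its $\ga$-part is read off from $\cB([v,J_tv],A) = -\cB(J_tv,[v,A]) = \cB(J_tv,\ad(A)v) = \cB(J_tv,J_1v) = a_1\cB(v,v)$, so $\widehat{[v,w]}|_{\g(t)} = \tfrac{a_1\cB(v,v)}{\cB(A,A)}\,\wh A = \tfrac{a_1}{4}\cB(v,v)\,\wh A$. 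The term $\na_{\wh v}\widehat{w}|_{\g(t)}$, with $w\in\gn\subset\gm$ a fixed element, splits as intrinsic plus normal: the normal (shape-operator) contribution vanishes since $g(\wh v,\widehat{w}) = f^2\cB(v,J_tv) = 0$, and the intrinsic part of the invariant connection of $(G/K,g_t)$ is $-\tfrac12[v,w]_\gm + U(v,w) = -\tfrac12[v,J_tv]_\ga$, the $U$-term vanishing by the same $\cB$-orthogonality identities; this yields $-\tfrac{a_1}{8}\cB(v,v)\,\wh A$. Summing, $\na_{\wh v}(J\wh v)|_{\g(t)} = \tfrac{a_1}{8}\cB(v,v)\,\wh A$.

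Finally, $(\na_{\wh v}J)\wh v = \tfrac{a_1}{8}\cB(v,v)\,\wh A + \tfrac{f'}{f}||\wh v||^2 J\xi$, and substituting $\wh A = u^{-1}J\xi$, $u = \tfrac1{2h}$ and $\cB(v,v) = f^{-2}||\wh v||^2$ converts the first summand into $\tfrac{uh^2a_1}{2f^2}||\wh v||^2 J\xi$, which gives \eqref{nn}. (Alternatively one may compute only the component $g((\na_{\wh v}J)\wh v,J\xi) = g(\na_{\wh v}(J\wh v),J\xi) - g(\na_{\wh v}\wh v,\xi)$: the second summand is $+\tfrac{f'}{f}||\wh v||^2$ by the second paragraph, while $g(\na_{\wh v}(J\wh v),J\xi) = -g(J\wh v,\na_{\wh v}(J\xi))$ since $g(J\wh v,J\xi)$ vanishes identically along the orbit, and $\na_{\wh v}(J\xi)|_{\g(t)} = -\tfrac{uh^2}{2f^2}\widehat{J_1v}$ follows by applying \eqref{auxplan} to the $G$-invariant field $J\xi = u\wh A$; the components of $(\na_{\wh v}J)\wh v$ along $\xi$ and along $\wh\gn$ vanish for the same reasons, using that $\na_{\wh v}(J\wh v)|_{\g(t)}$ and $\na_{\wh v}\wh v|_{\g(t)}$ lie in $\wh\ga$ and in $\bR\xi$ respectively.) Either way, the only genuinely nontrivial input is \eqref{auxplan}; everything else is bookkeeping with the $\cB$-orthogonality relations among the $J_iv$ and the normalizations established in Lemma~\ref{L1}.
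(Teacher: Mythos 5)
Your argument is correct, and it reaches \eqref{nn} by a route that differs from the paper's in its main technical device. The paper also splits $(\na_{\wh v}J)\wh v = \na_{\wh v}(J\wh v) - J\na_{\wh v}\wh v$ and shows the $\xi$- and $\wh\gn$-components vanish, but it then evaluates the remaining $J\xi$-component purely by Koszul-type manipulations with Killing fields, handling the non-Killing field $J\wh v$ by substituting the Killing field $\wh z$, $z = J_tv$, whenever $J\wh v$ occurs as a differentiation direction at $\g(t)$. You instead isolate the same difficulty once and for all in your identity \eqref{auxplan}, obtained from $G$-invariance of $J$ and the expansion of $\Ad_{\exp(sv)}$ along the orbit curve, and then reduce everything to the standard homogeneous-space formula $\na_{\wh v}\wh w|_{\g(t)} = -\tfrac12\widehat{[v,w]_\gm} + \widehat{U(v,w)}$ together with the shape operator from Lemma \ref{L1}; I checked that $U(v,J_tv)=0$ and $U(v,v)=0$ do follow from the orthogonality relations as you claim, that $\cB(A,A)=4$ is consistent with the normalization $\ad(A)|_\gn = J_1$ used in Lemma \ref{L1}, and that your value $\na_{\wh v}(J\wh v)|_{\g(t)} = \tfrac{a_1}{8}\cB(v,v)\wh A$ reproduces the paper's intermediate identity \eqref{prima}. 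Your approach buys a computation of the full vector rather than component-by-component bookkeeping, and, as your parenthetical shows (the value $\na_{\wh v}(J\xi)|_{\g(t)} = -\tfrac{uh^2}{2f^2}\widehat{J_1v}$ agrees with what Lemma \ref{NXI} gives), the same identity \eqref{auxplan} would streamline the subsequent Lemmata \ref{NXI} and \ref{NA}; the paper's Koszul computation is more self-contained in that it never invokes the invariant-connection formula on $G/K$. One small caution worth keeping explicit: the relation $J\xi = u\wh A$ holds only along the normal geodesic, so in the alternative computation one must indeed use the equivariant restriction $u\,\widehat{\Ad_{\exp(sv)}A}$ along the orbit curve (as \eqref{auxplan} does) and not a naive product rule applied to $u\wh A$ — your text handles this correctly, but it is the one place where a reader could go wrong.
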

\begin{proof} First of all, we claim  that $(\na_{\wh v}J)\wh v|_{\g(t)}$ has trivial orthogonal projections along $\bR \xi|_{\g(t)}$ and $\wh \gn|_{\g(t)}$. Indeed,  using  Lemma \ref{L1} and the properties that  $[\wh \gg,\xi] = 0$,  $\wh v$ is a Killing vector field and $\ad_A|_{\gn} = J_1$, we have$$g(\na_{\wh v}J\wh v, \xi) = \wh v\cdot g(J\wh v, \xi) - g(J\wh v, \na_{\wh v}\xi) = 0,$$
$$g(J\na_{\wh v}\wh v, \xi) = g(\na_{J\xi}\wh v, \wh v) = \frac{1}{2} J\xi\cdot ||\wh v||^2 =
- \frac{1}{2} u\ \wh A\cdot ||\wh v||^2 = - u\ g([\wh A,\wh v], \wh v) = 0$$
and this implies $g((\na_{\wh v}J)\wh v,\xi)_{\gamma(t)} = 0$. On the other hand, since $[\gn,\gn]\subseteq \gk +\ga$, we also have that $g([\wh v,\wh \gn], \wh \gn)_{\gamma(t)} = 0$ and hence
$g(\na_{\wh v}\wh v, \wh \gn)_{\gamma(t)} = 0$.   From this, using Koszul's formula,  the fact that $\wh v,\wh w$ are Killing fields and  setting $z = J_t v$  so  that
$\wh z|_{\gamma(t)} = J\wh v|_{\gamma(t)}$, we have that for any $w \in \gn$
$$ g((\na_{\wh v}J)\wh v,\wh w)_{\g(t)} = g(\na_{\wh v}J\wh v,\wh w)_{\g(t)} +  g( \na_{\wh v}\wh v, J\wh w)_{\g(t)}  =  \frac{1}{2}\left(J\wh v\cdot g(\wh v,\wh w)\right)|_{\g(t)} = $$
$$ = \frac{1}{2}\left(\wh z\cdot g(\wh v,\wh w)\right)|_{\g(t)} =\frac{1}{2}  \left.\left(g([\wh z,\wh v],\wh w) + g(\wh v, [\wh z,\wh w])\right)\right|_{\g(t)} = 0\ .$$
Hence,  to obtain  formula \eqref{nn}, we only need to compute $g((\na_{\wh v}J)\wh v,J\xi)|_{\g(t)}$.
By Koszul's formula and the fact that $\wh v$ and $\wh A$ are Killing and $J$-preserving,
\begin{equation}\label{Jxi} g(\na_{\wh v}J\wh v,J\xi)_{\g(t)} = u\ g(\na_{\wh v}J\wh v, \wh A)_{\g(t)} = \frac{u}{2}\left( J\wh v\cdot g(\wh v,\wh A)+ g([
\wh v,\wh A],J\wh v)\right)_{\g(t)}\ .\end{equation}
On the other hand, recalling that $J \wh w|_{\g(t)} = \sum_i a_i \wh{J_i v}|_{\g(t)}$,
$$g([\wh v,\wh A],J\wh v)_{\g(t)}= - g(\widehat{[v,A]},J\wh v) _{\g(t)}= g(\wh{J_1 v} , \sum_i a_i \wh{J_i w})_{\g(t)} = a_1 \left.||\wh v||^2\right|_{\g(t)}\ ,$$
while,  setting $z = J_t v =  \sum_i a_i J_i v$  so  that
$\wh z|_{\gamma(t)} = J\wh v|_{\gamma(t)} $,
$$\left(J\wh v\cdot  g(\wh v,\wh A)\right)_{\g(t)}   = \left(\wh z\cdot  g(\wh v,\wh A)\right)_{\g(t)}   = - g(\wh{[z,v]},\wh A)_{\g(t)}  -  g(\wh v, \wh{[z, A]})_{\g(t)}  = $$
$$ =
h^2 \cB (z, [A,v])  +  f^2 \cB(v, [A,z]) = $$
$$ = h^2 \cB (\sum a_i J_i v, J_1 v)  -  f^2 \cB(J_1v, \sum a_i J_i v) =  \left.\left(\frac{h^2}{f^2}-1\right) a_1 ||\wh v||^2 \right|_{\g(t)}\ .$$
From this and \eqref{Jxi}, we get
\beq \label{prima}Êg(\na_{\wh v}J\wh v,J\xi) = \frac{uh^2}{2f^2} a_1 ||\wh v||^2\ . \eeq
On the other hand, using Koszul's formula once again,
\begin{equation}\label{second}\left.g(J\na_{\wh v}\wh v, J\xi)\right|_{\g(t)}  = -\left.\frac{1}{2}(||\wh v||^2)' \right|_{\g(t)} = -ff'\cB(v,v) = - \left.\frac{f'}{f} ||\wh v||^2\right|_{\g(t)}\end{equation}
and the coefficient of $J\xi$ in \eqref{nn} is obtained subtracting  \eqref{second} from \eqref{prima}.
 \end{proof}
\begin{lem}\label{NXI}   At any point of the geodesic $\gamma(t)$ and for any
$v \in \gn$
\begin{equation}\label{nxi}(\na_{\xi}J)\wh v =  \sum_{i = 1}^3 a'_i \wh{J_i v}\ ,
\end{equation}
\begin{equation}\label{xin}(\na_{\wh v}J)\xi = -\frac{h}{4 f^2}\widehat{J_1v} - \frac{f'}{f}\wh{J_t v}.
\end{equation}
\end{lem}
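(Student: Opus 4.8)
The plan is to evaluate both identities at $\g(t)$ by expanding $(\na_XJ)Y=\na_X(JY)-J\na_XY$ and reducing every covariant derivative either to Lemma \ref{L1} or to the bracket relations of the reductive splitting $\gg=\gk+\ga+\gn$ (recall $[\ga,\gk]=0$, $[\ga,\gn]=J_1\gn$, $[\gn,\gn]\subseteq\gk+\ga$, and that $J_1=\ad(A)|_\gn$ is $\cB$-skew, since $\cB$ is $\ad$-invariant). For \eqref{nxi} I would argue as follows. Along $\g$ we have $J\wh v=\wh{J_tv}=\sum_i a_i(t)\wh{J_iv}$, and by Lemma \ref{L1}(1) together with $[\wh\gg,\xi]=0$ one has $\na_\xi\wh w=\na_{\wh w}\xi=\tfrac{f'}{f}\wh w$ at $\g(t)$ for every $w\in\gn$. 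Differentiating the curve $t\mapsto J\wh v|_{\g(t)}$ therefore gives $\na_\xi(J\wh v)=\sum_i a_i'\wh{J_iv}+\tfrac{f'}{f}\wh{J_tv}$, while $J\na_\xi\wh v=\tfrac{f'}{f}J\wh v=\tfrac{f'}{f}\wh{J_tv}$; subtracting yields \eqref{nxi}.

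For \eqref{xin} write $(\na_{\wh v}J)\xi=\na_{\wh v}(J\xi)-J\na_{\wh v}\xi$, where the last term is again $\tfrac{f'}{f}\wh{J_tv}$ by Lemma \ref{L1}(1). To compute $\na_{\wh v}(J\xi)$ I would use that $J\xi$ is a $G$-invariant vector field (as $G$ preserves both $J$ and $\xi$) which equals $u\wh A$ along $\g$; since $A$ is $\Ad_K$-fixed and $u$ depends only on $t$, one has $J\xi=u\,Z$ on the whole tube, where $Z$ is the $G$-invariant field with $Z|_\g=\wh A|_\g$. Because $\wh v\cdot u=0$ and $[\wh v,Z]=0$ by $G$-invariance of $Z$, this gives $\na_{\wh v}(J\xi)|_{\g(t)}=u\,\na_{\wh v}Z|_{\g(t)}=u\,\na_Z\wh v|_{\g(t)}=u\,\na_{\wh A}\wh v$. (Alternatively $g((\na_{\wh v}J)\xi,\wh w)$ can be expanded directly by Koszul's formula, as in the proof of Lemma \ref{NN}.)

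Now $\wh v$ and $\wh A$ are Killing, so $\na_{\wh v}\wh A+\na_{\wh A}\wh v=-\operatorname{grad}g(\wh v,\wh A)$ and $\na_{\wh v}\wh A-\na_{\wh A}\wh v=[\wh v,\wh A]=\wh{J_1v}$, and it remains only to compute $\operatorname{grad}g(\wh v,\wh A)$ at $\g(t)$. Using the explicit metric \eqref{metric}, $g(\wh v,\wh A)$ vanishes along $\g$ and in the $\wh A$-direction, while in the $\wh w$-direction ($w\in\gn$) its derivative is $(h^2-f^2)\cB(J_1v,w)$; hence $\operatorname{grad}g(\wh v,\wh A)=\tfrac{h^2-f^2}{f^2}\wh{J_1v}$ and $\na_{\wh A}\wh v=-\tfrac{h^2}{2f^2}\wh{J_1v}$. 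Using $u^2=\tfrac1{4h^2}$ from Lemma \ref{L1} we then get
$$(\na_{\wh v}J)\xi=u\,\na_{\wh A}\wh v-\frac{f'}{f}\wh{J_tv}=-\frac{uh^2}{2f^2}\wh{J_1v}-\frac{f'}{f}\wh{J_tv}=-\frac{h}{4f^2}\wh{J_1v}-\frac{f'}{f}\wh{J_tv},$$
which is \eqref{xin}.

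The only nonroutine point is the evaluation of the orbit-direction derivative of $g(\wh v,\wh A)$ (equivalently, of the off-diagonal Koszul terms between $\wh\ga$ and $\wh\gn$): this is where the factor $h^2-f^2$ is produced, and it rests on the metric restricted to each orbit being exactly $h^2\cB|_{\ga\times\ga}+f^2\cB|_{\gn\times\gn}$ together with the $\cB$-skew-symmetry of $J_1$. The vanishing of the $\xi$- and $J\xi$-components of the answer is automatic, reflecting $\na(J^2)=0$ and the $\Ad_K$-invariance of $J$.
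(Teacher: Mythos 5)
Your proof is correct and follows essentially the same route as the paper: the decisive steps --- using Lemma \ref{L1} together with $[\wh\gg,\xi]=0$ for \eqref{nxi}, reducing $\na_{\wh v}(J\xi)$ to $u\,\na_{\wh A}\wh v$ via the $G$-invariance of $J\xi$, and evaluating $\na_{\wh A}\wh v=-\tfrac{h^2}{2f^2}\widehat{J_1v}$ from the orbit metric --- are exactly those of the paper, which merely organizes the computation by pairing against $\wh w\in\wh\gn$ (after noting the $\xi$- and $J\xi$-components vanish because $(\gn^*)^{\Ad_K}=\{0\}$) and uses Koszul's formula where you use the Killing-field identity and the gradient of $g(\wh v,\wh A)$. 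These are equivalent bookkeeping choices, and your constants and signs (including $[\wh v,\wh A]=\widehat{J_1v}$ and $uh=\tfrac12$) match the paper's conventions.
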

\begin{proof} First of all, notice that   the projections of $(\na_{\xi}J)\wh v|_{\g(t)}$ and $(\na_{\wh v}J)\xi|_{\g(t)}$ along $\xi|_{\g(t)}$ or $J\xi|_{\g(t)}$ correspond to  elements of $(\gn^*)^{\Ad_K} = \{0\}$ and are therefore trivial.   Hence \eqref{nxi} is proved  observing that for any $w\in \gn$
$$ g((\na_{\xi}J)\wh v,\wh w)_{\g(t)} = (g(J\wh v,\wh w)_{\g(t)})' - g(J\wh v, \na_\xi\wh w)_{\g(t)} + g(\na_\xi \wh v,J\wh w)_{\g(t)} = $$
$$ \overset{\text{(Lemma \ref{L1})}} =  \cB\left(\sum_{i=1}^3(f^2a_i)'J_iv,w\right)  + 2\ \frac{f'}{f}g(\wh v,J\wh w)_{\g(t)} =    $$
$$ =  \frac{1}{f^2}\sum_{i=1}^3(f^2a_i)'\ g\left(\widehat{J_iv},\wh w\right)_{\g(t)}  - 2\ \frac{f'}{f}g(J \wh v,\wh w)_{\g(t)} = \sum_{i=1}^3 a_i'\ g\left(\widehat{J_iv},\wh w\right)_{\g(t)}  \ . $$
As for \eqref{xin}, we have
$$g((\na_{\wh v}J)\xi,\wh w)_{\g(t)} = g(\na_{J\xi}\wh v - J\na_{\wh v}\xi,\wh w)_{\g(t)} = u\ g(\na_{\wh A}\wh v, \wh w)_{\g(t)}  -
\frac{f'}{f}g(J\wh v,\wh w)_{\g(t)},$$
where, by Koszul's formula,
$$ g(\na_{\wh A}\wh v, \wh w)_{\g(t)} = \frac{1}{2} g([\wh A,\wh v],\wh w)_{\g(t)} - \frac{1}{2}  g([\wh w,\wh A],\wh v)_{\g(t)}- \frac{1}{2}  g([\wh w,\wh v],\wh A)_{\g(t)} = $$
$$= \frac{f^2}{2} (\cB([v,A],w) - \cB([A,w],v)) - \frac{h^2}{2}\cB([v,w],A) = $$
$$ =  -\frac{h^2}{2} \cB(w,[A,v]) = -\frac{h^2}{2 f^2} g(\wh w, \widehat{J_1v})_{\g(t)}\ .$$
Therefore $g((\na_{\wh v}J)\xi,\wh w)_{\g(t)}= -g\left(\frac{uh^2}{2f^2}\widehat{J_1v} + \frac{f'}{f}J\wh v,\wh w\right)_{\g(t)}$
and the  claim follows from the fact that $uh = 1/2$ by Lemma \ref{L1}.\end{proof}

\begin{lem}\label{NA}    At any point of the geodesic $\gamma(t)$ and for any
$v \in \gn$
\begin{equation}\label{nJxi}\left(\na_{\wh A}J \right)\wh v = \left(\frac{h^2}{2f^2} - 1\right)\widehat{[J_t,J_1]v} ,
\end{equation}
\begin{equation}\label{Jxin}(\na_{\wh v}J)\wh A = \frac{h^2}{2f^2} \widehat{J_tJ_1v} - \frac{f'}{uf}\hv.
\end{equation}
\end{lem}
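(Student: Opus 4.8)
The plan is to compute the two covariant derivatives exactly as in Lemmas \ref{NN} and \ref{NXI}: by a module-theoretic argument I first localize the possible components of each vector, and then I pin down the surviving coefficients with Koszul's formula, using that $\wh v$, $\wh w$ and $\wh A$ are Killing fields, that $J$ is $\Ad_K$-invariant, and the bracket relations $[\gn,\gn]\subseteq\gk+\ga$, $[\ga,\gk]=0$, $[\ga,\gn]\subseteq\gn$ with $\ad(A)|_{\gn}=J_1$. For $(\na_{\wh A}J)\wh v$: since $\wh A$ is proportional to $J\xi$ along $\gamma(t)$ and $J\xi\cdot(\text{$K$-invariant scalar})=0$ (because $\wh A$ kills $K$-invariant functions, as was already used in Lemma \ref{NN}), the projections of $(\na_{\wh A}J)\wh v$ onto $\bR\xi$ and $\bR J\xi$ vanish; so it lies in $\wh\gn$. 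I then evaluate $g((\na_{\wh A}J)\wh v,\wh w)$ for $w\in\gn$ via Koszul, obtaining a combination of the Cartan–Killing pairings $\cB([A,v],w)$, $\cB([A,w],v)$ and of $f^2\cB(J_iv,w)$-terms coming from differentiating $J$; the $\ad(A)=J_1$ identification turns the first group of terms into $\cB(J_1v,\cdot)$ and $\cB(J_1w,\cdot)$ and, after using $g(J\wh v,\wh w)=f^2\cB(J_tv,w)$, the whole expression collapses to $\left(\frac{h^2}{2f^2}-1\right)\cB\big((J_tJ_1-J_1J_t)v,w\big)$, which is exactly \eqref{nJxi}.

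For $(\na_{\wh v}J)\wh A$, the same projection argument (elements of $(\gn^*)^{\Ad_K}=\{0\}$) shows there is no $\wh\gn$-component, so the vector lies in $\bR\xi+\bR J\xi$; I compute its two coefficients by pairing with $\xi$ and with $J\xi=u\wh A$. Writing $(\na_{\wh v}J)\wh A=\na_{\wh v}(J\wh A)-J\na_{\wh v}\wh A$ and recalling $J\wh A|_{\gamma(t)}=\tfrac1u\,\widehat{J_t v}\cdot(\dots)$—more precisely using $J\wh A=-\tfrac1u\,\xi$ along $\gamma(t)$ together with part (2) of Lemma \ref{L1} and the already-proved \eqref{xin}—the $\xi$-component produces the $-\tfrac{f'}{uf}$ term and the $J\xi$-component the $\tfrac{h^2}{2f^2}$ term. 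Concretely I pair against $\wh A$: $g(\na_{\wh v}J\wh A,\wh A)$ is handled by Koszul exactly as the computation of $g(\na_{\wh A}\wh v,\wh w)$ in Lemma \ref{NXI} but with the roles adjusted, giving $\tfrac{h^2}{2}\cB(J_tJ_1v,\cdot)$-type terms, while $-g(J\na_{\wh v}\wh A,\wh A)=g(\na_{\wh v}\wh A,\xi/u)\cdot(\dots)$ is evaluated from $g(\na_{\wh v}\wh A,\xi)=-g(\wh A,\na_{\wh v}\xi)=-\tfrac{f'}{f}g(\wh A,\wh v)=0$ and the $t$-derivative of $\|\cdot\|^2$-terms, producing the $f'/uf$ contribution. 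Cross-checking against \eqref{xin} and the identity $uh=1/2$ from Lemma \ref{L1} fixes all signs.

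The main obstacle is purely bookkeeping rather than conceptual: one must keep straight simultaneously the three different bilinear forms in play (the metric $g$ on $M$, the fixed form $\cB$ on $\gg$, and the rescaled inner products $h^2\cB|_\ga$, $f^2\cB|_\gn$), and must correctly commute the $t$-dependent complex structure $J_t=\sum a_i J_i$ past the fixed $J_1=\ad(A)|_\gn$—the commutator $[J_t,J_1]=\sum_i a_i[J_i,J_1]$ is where the term $\widehat{[J_t,J_1]v}$ in \eqref{nJxi} originates, and it is easy to drop or misorder. Otherwise the proof is a direct application of Koszul's formula together with the structure facts already recorded, entirely parallel to Lemmas \ref{NN} and \ref{NXI}.
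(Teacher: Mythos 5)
Your treatment of \eqref{nJxi} is essentially the paper's own argument: localize $(\na_{\hA}J)\hv|_{\g(t)}$ to $\wh\gn$ by the representation-theoretic observation that its $\xi$- and $J\xi$-projections are $\Ad_K$-invariant functionals on $\gn$ and hence vanish, then evaluate $g((\na_{\hA}J)\hv,\hw)$, $w\in\gn$, via the Killing/$J$-preserving property of $\hA$ and Koszul's formula; the commutator $\widehat{[J_t,J_1]v}$ indeed comes out of the difference $g(\na_{\hv}\hA,J\hw)-g(\na_{\hw}\hA,J\hv)$, exactly as you describe (your specific ``$\hA$ kills $K$-invariant scalars'' remark is not really needed; the module argument you announce at the outset is what does the work).

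For \eqref{Jxin}, however, there is a genuine error: you assert that $(\gn^*)^{\Ad_K}=\{0\}$ forces $(\na_{\hv}J)\hA$ to have no $\wh\gn$-component and to lie in $\bR\xi+\bR J\xi$. The implication runs the other way. The maps $v\mapsto g((\na_{\hv}J)\hA,\xi)|_{\g(t)}$ and $v\mapsto g((\na_{\hv}J)\hA,J\xi)|_{\g(t)}$ are $\Ad_K$-invariant elements of $\gn^*$ (since $\hA$, $\xi$, $g$, $J$, $\na$ are $K$-invariant), so it is precisely these two components that vanish and $(\na_{\hv}J)\hA$ lies in $\wh\gn$ --- consistently with the formula you are proving, whose right-hand side $\frac{h^2}{2f^2}\widehat{J_tJ_1v}-\frac{f'}{uf}\hv$ is $\wh\gn$-valued. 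Consequently your plan of ``computing the two coefficients by pairing with $\xi$ and with $J\xi=u\hA$'' cannot succeed: both pairings are zero, and they cannot produce the terms $-\frac{f'}{uf}\hv$ and $\frac{h^2}{2f^2}\widehat{J_tJ_1v}$, which are orthogonal to $\xi$ and $J\xi$; indeed your own description (the ``$\xi$-component'' giving a multiple of $\hv$) is internally inconsistent. The correct computation pairs with $\hw$, $w\in\gn$: expand $g((\na_{\hv}J)\hA,\hw)$ using that $\hv,\hw,\hA$ are Killing and $J$-preserving, substitute $J\hA|_{\g(t)}=-\frac1u\,\xi$, reuse the Koszul evaluation of $g(\na_{\hv}\hA,J\hw)$ obtained in the proof of \eqref{nJxi}, and apply $\na_{\xi}\hv=\frac{f'}{f}\hv$ from Lemma \ref{L1}; this yields $\frac{h^2}{2f^2}\,g(\widehat{J_tJ_1v},\hw)-\frac{f'}{uf}\,g(\hv,\hw)$ and hence \eqref{Jxin}.
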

\begin{proofwithnoend} By the same arguments in  the previous lemma,  $(\na_{\hA}J)\wh v|_{\g(t)}$ and
$(\na_{\wh v}J)\wh A|_{\g(t)}$
have non trivial components only along $\wh \gn$. Now,  for every $w\in \gn$, using the fact that $\wh A$ is
Killing and $J$-preserving,
$$g((\na_{\wh A}J)\wh v,\wh w) = \wh A\cdot g(J\wh v,\wh w) - g(J\wh v, \na_{\hA}\wh w) + g(\na_{\hA}\wh v,J\wh w) =$$
$$=   g(\na_{\wh v}\hA, J\wh w) - g(\na_{\wh w}\hA, J\wh v).$$
For given $t$, we set $x =  J_t w$ so that $g(\na_{\wh v}\wh A, J\wh w)_{\g(t)}Ê= g(\na_{\hv}\hA,\wh x)_{\g(t)}$. Then
$$g(\na_{\wh v}\wh A, J\wh w)_{\g(t)}Ê=   \frac{1}{2}\left( g([\hv,\hA],\wh x) + g([\hA,\wh x],\hv) -
g(\hA,[\wh x,\hv])\right)_{\g(t)} =$$
\beq \label{ok} =  \frac{1}{2} (2f^2 - h^2) \cB([A,v], J_t w) =   \left(\frac{h^2}{2 f^2} - 1\right)
g(\widehat{J_t J_1v},\hw)_{\g(t)}\ .\eeq
Similarly, one finds that
$g(\na_{\hw}\hA,J\wh v)_{\g(t)} = \left(\frac{h^2}{2 f^2} - 1\right)\ g(\widehat{J_1J_tv},\hw)_{\g(t)} $,
so that
$$g((\na_{\wh A}J)\wh v,\wh w)_{\g(t)} = \left(\frac{h^2}{2f^2} - 1\right)
g(\widehat{[J_t,J_1]v},\hw)_{\g(t)}\ ,$$
which implies \eqref{nJxi}. As for \eqref{Jxin}, the following holds at the points $\g(t)$
$$g((\na_{\hv}J)\hA,\hw) = \hv\cdot g(J\hA,\hw)- g(J\hA,\na_{\hv}\hw) - g(J\na_{\hv}\hA,\hw) = $$
$$ = - g([\hv,\hA],J\hw) - g(J\hA,\na_{\hw}\hv) + g(\na_{\hv}\hA,J\hw) = $$
$$ =  g(J \widehat{[A,v]},\hw)  + g(\hw,\na_{J\hA}\hv) + g(\na_{\hv}\hA,J\hw)
\overset{(\text{by}\ \eqref{ok} \ \text{and}\ J \wh A|_{\g(t)} = - \frac{1}{u}\xi|_{\g(t)})} = $$
$$= \frac{h^2}{2f^2}  g(\widehat{J_t J_1v},\hw)  - \frac{1}{u}  g(\hw,\na_{\xi}\hv)
\overset{\text{(Lemma \ref{L1})}}= \frac{h^2}{2f^2}  g\left(\widehat{J_t J_1v} - \frac{f'}{uf} \hv,\hw\right)\ .\eqno\qed$$
 \end{proofwithnoend}
 \bigskip
We can now insist the condition that $(\nabla_X J)X = 0$. By  $G$-invariance and  Lemmata \ref{L1} -  \ref{NA},  we obtain that $(g, J)$ is  NK if and only if  $f$, $h$ and $a_i$ are  solutions of the  equations:
\beq \label{B2} \frac{h}{4f^2} a_1 + \frac{f'}{f} = 0 \ ,\qquad
a_i' - \frac{f'}{f} a_i - \frac{h}{4 f^2}\d_{i1} = 0\ \ \ \text{for} \ i = 1,2,3\ ,\eeq
\beq \label{B3}Êa_j\left(\frac{3 h^2}{2 f^2} - 2\right) = 0 \quad \text{for}\ j = 2, 3\ ,\qquad
a_1^2 + a_2^2 + a_3^2 = 1
\eeq
(we used  the equalities  $u h = \frac{1}{2}$,  $J_1 J_2 =  J_3$ and  $J_3 J_1  = J_2$). Moreover,
%(we used the fact that $[J_t,J_1] = -2a_2J_3 + 2a_3J_2$ and $J_tJ_1 = -a_1 Id - a_2J_3 + a_3J_2$)
%\bigskip
%\begin{itemize}
%\item[(A1)] $\frac{uh^2}{2f^2} a_1 + \frac{f'}{f} = 0$;
%\item[(A2)] $\frac{1}{f^2}\sum_{i=1}^3(f^2a_i)'J_i - \frac{h}{4f^2}J_1 = 3 \frac{f'}{f} J_t$;
%\item[(A3)] $(\frac{h^2}{2f^2} - 1)[J_t,J_1] = -\frac{h^2}{2f^2} J_tJ_1 + \frac{f'}{uf} Id$;
%\item[(A4)] $a_1^2 + a_2^2 + a_3^2 = 1$.
%\end{itemize}
%\bigskip
%These equations can be rewritten as
%\bigskip
%\begin{itemize}
%\item[(B1)] $a_1 = \frac{-4ff'}{h}$;
%\item[(B2)] $a_i' = \frac{f'}{f}a_i + \frac{h}{4f^2}\delta_{1i}$ for $i=1,2,3$;
%\item[(B3)] $a_2\left( \frac{3h^2}{2f^2} - 2\right) = 0$;
%\item[(B4)] $a_3\left( \frac{3h^2}{2f^2} - 2\right) = 0$;
%\item[(B5)] $a_1^2 + a_2^2 + a_3^2 = 1$.
%\end{itemize}
%\bigskip
%where ; note that
%the component of (A3) along $Id$ coincides with the equation (A1).
\begin{lem} If the NK structure is non-K\"ahler (hence strict), then
$\frac{h}{f} =  \frac{2}{\sqrt{3}}$.
\end{lem}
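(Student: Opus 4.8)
By \eqref{B3}, at every $t\in I$ either $\tfrac{3h(t)^{2}}{2f(t)^{2}}=2$, i.e.\ $h(t)/f(t)=2/\sqrt3$, or else $a_{2}(t)=a_{3}(t)=0$. The plan is to show that the open set $U:=\{t\in I:\ h(t)/f(t)\neq2/\sqrt3\}$ is empty. On $U$ the first alternative fails, so $a_{2}\equiv a_{3}\equiv0$ and hence $a_{1}^{2}\equiv1$ there; being continuous, $a_{1}$ is equal to a constant $\epsilon\in\{+1,-1\}$ on each connected component of $U$. Consequently, on $U$ we have $a_{i}'\equiv0$ for $i=1,2,3$, the induced complex structure is $J_{t}=\epsilon J_{1}$, and the first equation of \eqref{B2} (with $a_{1}=\epsilon$) reduces to $\tfrac{h}{4f^{2}}=-\epsilon\tfrac{f'}{f}$.

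The next step is to show that $\na J\equiv0$ along $\g(t)$ for $t\in U$. Since $d\omega=3\,g((\na_{\cdot}J)\,\cdot,\cdot)$ is a $G$-invariant $3$-form and $\gn\cong\bC^{2}$ is the standard $\SU_{2}$-module, the only $K$-invariant $3$-forms on $T_{\g(t)}M=\bR\xi\oplus\bR\wh A\oplus\wh\gn$ are the linear combinations of $\xi^{*}\wedge\omega_{i}$ and $(J\xi)^{*}\wedge\omega_{i}$, where the $\omega_{i}=g(J_{i}\cdot,\cdot)$, $i=1,2,3$, span the invariant $2$-forms on $\wh\gn$; hence it suffices to prove that $(\na_{\xi}J)\wh v=0$ and $(\na_{J\xi}J)\wh v=0$ for every $v\in\gn$. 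On $U$, Lemma \ref{NXI} gives $(\na_{\xi}J)\wh v=\sum_{i=1}^{3}a_{i}'\,\widehat{J_{i}v}=0$, and Lemma \ref{NA}, together with $J\xi=u\wh A$, gives $(\na_{J\xi}J)\wh v=u\big(\tfrac{h^{2}}{2f^{2}}-1\big)\widehat{[J_{t},J_{1}]v}=0$ because $[J_{t},J_{1}]=\epsilon[J_{1},J_{1}]=0$. Thus $d\omega$ vanishes along $\g$ over $U$, so $\na J$ vanishes on the open set $\pi^{-1}(U)\subseteq M$. (Equivalently, one may substitute the relations above, together with $uh=\tfrac12$ and $J_{1}^{2}=-\Id$, directly into Lemmas \ref{L1}--\ref{NA}: besides the four components in Lemma \ref{L1} and the two above, the diagonal term $(\na_{\wh v}J)\wh v$ vanishes by the first equation of \eqref{B2}, and the remaining components $(\na_{\wh v}J)\xi$, $(\na_{\wh v}J)\wh A$ then also vanish, for instance by the skew-symmetry $(\na_{X}J)Y=-(\na_{Y}J)X$ of an NK structure.)

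To conclude, invoke strictness: a $6$-dimensional non-K\"ahler NK manifold is strict (\S\ref{prelim2}), i.e.\ $\na_{v}J|_{q}\neq0$ for every $q\in M$ and every $0\neq v\in T_{q}M$ --- equivalently, as recalled in the proof of Lemma \ref{subm}, it is of constant type with a fixed \emph{positive} constant $\alpha$. In either form this rules out $\na J$ vanishing on a non-empty open subset of $M$; hence $U=\emptyset$, and therefore $h/f\equiv2/\sqrt3$ on all of $I$. No step here seems a genuine obstacle: the only mildly delicate points are the passage to the open set $U$ (so that the $a_{i}$ are genuinely constant there, which is what lets Lemma \ref{NXI} yield $(\na_{\xi}J)\wh v=0$) and keeping track of the sign $\epsilon$ on the connected components of $U$.
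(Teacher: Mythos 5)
Your proposal is correct and follows essentially the same route as the paper: use \eqref{B3} to force $a_2=a_3=0$ (hence $a_i'=0$ and $[J_t,J_1]=0$) on the open set where $h/f\neq 2/\sqrt 3$, conclude via Lemmas \ref{NXI} and \ref{NA} and the $K$-invariance of $d\omega$ that $d\omega$, hence $\na J$, vanishes there, and contradict strictness. Your explicit description of the invariant $3$-forms and the bookkeeping of the sign $\epsilon$ just spell out what the paper delegates to the argument in Lemma \ref{lemma33}.
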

\begin{proof} From $\eqref{B3}$, if $\frac{h(t_o)}{f(t_o)} \not= \frac{2}{\sqrt{3}}$ for some $t_o$, then $a_2(t )= a_3(t) = 0$ and $a'_i = 0$ on some
open neighborhood  $I_o\subset I$ of $t_o$.  By \eqref{nxi} and \eqref{nJxi}, it follows that $(\na_{\xi}J)(\wh \gn) = 0 = (\na_{\hA}J)(\wh \gn)$ and
$d\omega = 3 g(\nabla J \cdot, \cdot)$ vanishes when restricted to $\bR \xi \times \wh\gn \times \wh \gn$
and  $\bR \wh A \times \wh\gn \times \wh \gn$. As pointed out in the proof of
Lemma \ref{lemma33}, the invariance under the isotropy representation implies that $d \omega$ is identically $0$
on $\g(I_o)$, contradicting the hypotheses.  \end{proof}
Finally, observe that  when $i =2,3$ the equations $\eqref{B2}_2$   are equivalent to say that
%
%Therefore the set of equations (B1)-(B4) can be rewritten as
%\begin{itemize}
%\item[(C1)] $a_1 = -\frac{4ff'}{h}$;
%\item[(C2)] $a_1' = \frac{f'}{f}a_1 + \frac{h}{4f^2}$;
%\item[(C3)] $a_2' = \frac{f'}{f} a_2$;\ $a_3' = \frac{f'}{f} a_3$;
%\item[(C4)] $h = \frac{2}{\sqrt{3}} f$;
%\item[(C5)] $a_1^2 + a_2^2 + a_3^2 = 1$.
%\end{itemize}
there exist non zero real constants $k$, $\wt k$ such that
$$a_2 = k f\ ,\qquad a_3 = \wt k f\ .$$
By a suitable rotation  in the plane spanned by $J_2$, $J_3$, we can always assume that $\wt k = 0$. So using this and the previous lemma, the system of equation characterizing the non-K\"ahler NK structures becomes
\beq a_1 + \sqrt{12}f' = 0 \ ,\qquad
 a_2 = k f\ ,\qquad a_3 = 0\ ,\qquad h = \frac{2}{\sqrt{3}} f\ ,\eeq
\beq \label{ultima} (f')^2 - ff'' - \frac{1}{12} = 0\ ,
\qquad
12(f')^2 + k^2 f^2 = 1\ ,\eeq
which shows that any $G$-invariant NK structure is completely determined by the function $f$ and the constant $k$ that solve \eqref{ultima}. The solutions of \eqref{ultima} are of the form
\begin{equation}\label{EQ1}
 \left\{\begin{array}{cc}
f(t) = 	A \cos(\frac{k}{\sqrt{12}}t) + B \sin(\frac{k}{\sqrt{12}}t)& \\
\phantom{a}\\
(A^2 + B^2) k^2 = 1 &
\end{array}\right. \end{equation}
for some real constants $A,B$.  Under the change of  parameter
$$ u  = k t  -\arcsin(kB)\sqrt{12}$$
any function \eqref{EQ1} becomes of the form
$$f(u) = \frac{1}{k} \cos\left( \frac{u}{\sqrt{12}}\right)$$
and, at the points of $\g(t)$,  the corresponding metric $g$ is
$$g = \frac{1}{k}\left(du^2 + \cos^2(\frac{u}{\sqrt{12}})\ \cB|_{\ga\times \ga} + \frac{4}{3}\cos^2(\frac{u}{\sqrt{12}})\ \cB|_{\gn\times \gn}\right)\ .$$
This means that, up to homotheties, there is a unique  non-K\"ahler $\SU_3$-invariant NK structure on neighborhoods of regular orbits.   This uniqueness property implies that such NK structure coincides with  the standard non-K\"ahler $\SU_3$-invariant NK structure of  $S^6$ described in \S \ref{prelim}.\par

\bigskip

\bigskip\bigskip
\font\smallsmc = cmcsc8
\font\smalltt = cmtt8
\font\smallit = cmti8
\hbox{\parindent=0pt\parskip=0pt
\vbox{\baselineskip 9.5 pt \hsize=3.1truein
\obeylines
{\smallsmc
Fabio Podest\`a
Dip. Matematica e Appl. per l'Architettura
Universit\`a di Firenze
Piazza Ghiberti 27
50122 Firenze
ITALY}

\medskip
{\smallit E-mail}\/: {\smalltt podesta@math.unifi.it
}
}
\hskip 0.0truecm
\vbox{\baselineskip 9.5 pt \hsize=3.7truein
\obeylines
{\smallsmc
Andrea Spiro
Dip. Matematica e Informatica
Universit\`a di Camerino
Via Madonna delle Carceri
I-62032 Camerino (Macerata)
ITALY
}\medskip
{\smallit E-mail}\/: {\smalltt andrea.spiro@unicam.it}
}
}

\end{document}